\documentclass[10pt]{amsart}
\usepackage{amssymb,amsmath}
\usepackage{mathtools}
\usepackage{graphicx}

\begin{document}

\newtheorem{thm}{Theorem}[section]
\newtheorem{lemma}[thm]{Lemma}
\newtheorem{defin}[thm]{Definition}
\newtheorem{rmk}[thm]{Remark}
\newtheorem{conj}[thm]{Conjecture}
\newtheorem{cor}[thm]{Corollary}
\newtheorem{mr}[thm]{Main Result}
\newtheorem{ass}[thm]{Assumption}
\newtheorem{prop}[thm]{Proposition}
\newtheorem{qu}[thm]{Question}

\makeatletter

\newcommand{\explain}[2]{\underset{\mathclap{\overset{\uparrow}{#2}}}{#1}}
\newcommand{\explainup}[2]{\overset{\mathclap{\underset{\downarrow}{#2}}}{#1}}

\makeatother

\newcommand{\map}{\mbox{$\rightarrow$}}
\newcommand{\bbb}{\mbox{$\beta$}}
\newcommand{\la}{\mbox{$\lambda$}}
\newcommand{\aaa}{\mbox{$\alpha$}}
\newcommand{\eee}{\mbox{$\epsilon$}}
\newcommand{\Rrr}{\mbox{$\mathbb{R}$}}
\newcommand{\lpd}{\mbox{$L^{\mathcal{V}(P,D^*)}$}}
\newcommand{\fpd}{\mbox{$\mathcal{V}(P,D^*)$}}
\newcommand{\bdd}{\mbox{$\partial$}}
\newcommand{\ra}{\rightarrow}
\newcommand{\Li}{\mbox{$L_+^{in}$}}
\newcommand{\Lo}{\mbox{$L_+^{out}$}}

\title{Bridge Number and Tangle products}
\author{Ryan Blair}
\thanks{Research partially supported by NSF and JSPS grants.}
\begin{abstract}
We show that essential punctured spheres in the complement of links with distance three bridge spheres have bounded complexity. We define the operation of tangle product, a generalization of both connected sum and Conway product. Finally, we use the bounded complexity of essential punctured spheres to show that the bridge number of a tangle product is at least the sum of the bridge numbers of the two factor links up to a constant error.
\end{abstract}
\maketitle

\section{Introduction}

Bridge number is a classical link invariant originally introduced by Schubert as a tool to study companion tori. In \cite{HSCH54}, Schubert proves the remarkable fact that given a composite knot $K$ with summands $K_{1}$ and $K_{2}$, the following equality holds:
$\beta(K)=\beta(K_{1})+\beta(K_{2})-1$, where $\beta(L)$ denotes the bridge number of a link $L$. Schultens gives a modern proof of this equality in \cite{JSCH01}.

Connected sum is a classical and intentionally restrictive method of amalgamating two links in $S^3$ together to create a new link in $S^3$. Tangle products are the natural generalization of this amalgamation operation. Roughly speaking, to form an $n$-strand tangle product of links $K_1$ and $K_2$ remove an $n$-strand rational tangle from the 3-sphere containing $K_1$ and the three sphere containing $K_2$. Now, glue the resulting tangles together via some homeomorphism of the $2n$-punctured sphere. The result is a tangle product, denoted $K_{1} \ast_{S} K_{2}$. For a rigorous definition see Section \ref{sec:tangleProd}. In particular, connected sums are 1-strand tangle products and Conway products are 2-strand tangle products. Conway products were studied in \cite{ST06} where Scharlemann and Tomova produced Conway products which respected multiple bridge surfaces. How bridge number behaves with respect to Conway products was studied in \cite{B}. The goal of this paper is to generalize Schubert's equality for bridge number to the operation of tangle product.

Because of their generality and the choices involved, tangle products are exceptionally poorly behaved. For example, for any two $n$ bridge knots $K_1$ and $K_2$ there exists an $n$-strand tangle product $K_{1} \ast_{S} K_{2}$ isotopic to the unknot. Hence, we will have to restrict our hypothesis to achieve a meaningful lower bound for $\beta(K_{1} \ast_{S} K_{2})$ in terms of $\beta(K_{1})$ and $\beta(K_2)$. Similarly, if $U$ is the unknot, there exist tangle products $U \ast_{S} U$ of arbitrarily high bridge number. This observation implies, in the absence of additional information, that there does not exist an upper bound for $\beta(K_{1} \ast_{S} K_{2})$ in terms of $\beta(K_{1})$ and $\beta(K_{2})$. The following is the main theorem.

\begin{thm}\label{main}
Given an $n$-strand tangle product $K_1\ast_{S}K_2$ such that there exists a minimal bridge sphere for $K_1\ast_{S}K_2$ of distance at least three and the product sphere $S$ is c-incompressible, then $\beta(K_1\ast_{S}K_2) \geq \beta(K_1) + \beta(K_2) - n(10n-6)$.
\end{thm}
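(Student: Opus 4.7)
The plan is to adapt the Schubert--Schultens proof of the identity $\beta(K_1\#K_2)=\beta(K_1)+\beta(K_2)-1$ for connected sums, replacing the summing sphere with the product sphere $S$ and absorbing the extra geometric complications into the error constant. First I would fix a minimal bridge sphere $P$ of distance at least three, so $|P\cap(K_1\ast_S K_2)|=2\beta(K_1\ast_S K_2)$, and put $P$ and $S$ in general position with $|P\cap S|$ minimized among all isotopies preserving the c-incompressibility of $S$ and the bridge-sphere property of $P$. Standard innermost-disk and c-disk arguments then force every component of $P\cap S$ to be essential in both punctured surfaces. At this point the main technical result advertised in the abstract applies: because $P$ has distance $\geq 3$ and $S$ is c-incompressible, both the number of curves of $P\cap S$ and the complexity of each region of $P\setminus S$ are bounded by an explicit function of $n$.

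Next, use $S$ to cut $S^3$ into balls $B_1,B_2$; the ball $B_i$ contains a $2n$-strand tangle $T_i$ which, when capped off by the rational tangle removed during the construction, recovers $K_j$ (where $\{i,j\}=\{1,2\}$). Restrict $P$ to $B_j$, obtaining a planar surface $P_j$ that meets $T_j$ in $|P\cap K_j^{\mathrm{tangle}}|$ points and whose boundary is a collection of essential curves on $S$. Glue in the rational tangle for the $K_j$ side and cap off each curve of $\partial P_j$ by a disk placed in standard position against the rational tangle's own bridge structure. The resulting surface $\widehat P_j\subset S^3$ is a bridge sphere for $K_j$ satisfying
\[
2\beta(K_j)\;\leq\;|P\cap K_j^{\mathrm{tangle}}|+E_j(n),
\]
where $E_j(n)$ is the extra cost of the capping and rational-tangle straightening on the $K_j$ side.

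Summing over $j=1,2$ and using $|P\cap K_1^{\mathrm{tangle}}|+|P\cap K_2^{\mathrm{tangle}}|=2\beta(K_1\ast_S K_2)$ then gives
\[
\beta(K_1)+\beta(K_2)\;\leq\;\beta(K_1\ast_S K_2)+\tfrac{1}{2}\bigl(E_1(n)+E_2(n)\bigr),
\]
and tracking the constants produces the claimed $n(10n-6)$ error. The main obstacle will be controlling $E_j(n)$: each curve of $P\cap S$ may, in the worst case, force the insertion of $O(n)$ extra maxima when its capping disk is pushed into bridge position relative to an $n$-strand rational tangle, and the bounded-complexity theorem supplies at most $O(n)$ such curves, yielding the quadratic term $10n^2-6n$. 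Making the $O(n)$-per-curve bound sharp while keeping the rational tangle compatible with the bridge structure on the other ball is the technical heart of the argument; the distance-three hypothesis on $P$ is crucial here, since without it the number of intersection curves, and hence the error, would be uncontrolled.
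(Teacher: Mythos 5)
Your outline diverges from the paper's argument in a way that opens a real gap. The paper never cuts the bridge sphere along $S$ and caps off; instead it fixes the height function $h$ for which the distance-$\geq 3$ minimal bridge sphere $\Sigma$ is a level sphere, takes $S$ taut with respect to $\Sigma$, and uses Theorem \ref{thm:struc} together with Lemma \ref{numberofsaddles} to bound the number of saddles of the induced foliation $\digamma_S$ by $5(2n)-8=10n-8$. It then removes these saddles one at a time, each removal costing at most $n$ new maxima of $h|_{K}$, until $S$ is a round level sphere, and finally recovers each $K_j$ by gluing a rational tangle to $(B_j, K\cap B_j)$ at a cost of at most $n$ additional maxima per side; this yields $\beta(K_1)+\beta(K_2)\leq \beta(K_1\ast_S K_2)+n(10n-8)+2n$. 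Crucially, the bound $\beta(K_j)\leq(\text{number of maxima of }K\text{ in }B_j)+n$ only requires exhibiting an embedding of $K_j$ with few maxima --- no bridge sphere for either factor is ever constructed.

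The gap in your version is the step where you cap off $\partial P_j$ and assert that $\widehat{P}_j$ is a bridge sphere for $K_j$. A sphere meeting $K_j$ transversely in $m$ points certifies $\beta(K_j)\leq m/2$ only if it decomposes $(S^3,K_j)$ into two trivial tangles, and nothing in your construction guarantees this: the pieces of the two trivial tangles bounded by $P$ that survive cutting along $S$ are not themselves trivial, and gluing in capping disks and the rational tangle can leave knotted or non-boundary-parallel arcs on either side of $\widehat{P}_j$. Making that step work is not a matter of ``tracking constants''; it is the whole difficulty, and the paper avoids it entirely by counting maxima of the ambient height function instead of punctures of a hypothetical factor bridge sphere. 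A second, smaller issue: Theorem \ref{thm:struc} bounds the saddles of $\digamma_S$, not $|P\cap S|$ or the complexity of the regions of $P\setminus S$ directly; to convert it into a bound on $|P\cap S|$ you would still need to realize $P$ as a level sphere of $h$ and relate the level circles of $S$ to its saddles, which is essentially the paper's setup. Once you adopt that setup, the cleaner conclusion is the paper's: eliminate the saddles and count maxima.
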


In \cite{B}, the key additional hypothesis needed to produce a lower bound on the bridge number of a Conway product in terms of the bridge number of the two factor links was that bridge position and thin position coincide for the Conway product. In contrast, the result presented here is heavily dependent on the hypothesis that the tangle product has a minimal bridge sphere of distance at least three. This hypothesis allows for a much stronger structure theorem then that found in \cite{B} and, thus, a more restrictive lower bound on the bridge number of a tangle product. It is important to note that having the property that bridge position is thin position and having the property that a minimal bridge sphere is distance at least three are believed to be independent conditions. The following structure theorem is of independent interest and is proven in Section \ref{structure}, h-level and taut are defined in Section \ref{prelim}, and standard and adjacent are defined in Section \ref{structure}.

\begin{thm}\label{thm:struc}
Suppose there exists a bridge sphere $\Sigma$ for $K$ of distance three or greater and $S$ is taut with respect to an $h$-level embedding of $\Sigma$, then the following hold:

1)there do not exist standard saddles $\sigma$, $\rho$, and $\tau$ for $S$ such that $\sigma$ is adjacent to $\rho$ and $\rho$ is adjacent to $\tau$.

2)every outermost disk of the foliation of $S$ induced by the standard height function contains at least one point of $K\cap S$.
\end{thm}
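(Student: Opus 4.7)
The plan is to translate each structural feature on $S$ (an outermost puncture-free disk, or a chain of adjacent standard saddles) into a collection of essential simple closed curves in $\Sigma\setminus K$ each bounding a disk on a definite side of $\Sigma$ in $(S^3,K)$, and then to assemble a short path in the curve complex $\mathcal{C}(\Sigma\setminus K)$ between disk-bounding curves on \emph{opposite} sides of $\Sigma$, contradicting the hypothesis $d(\Sigma)\geq 3$.

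For (2), I would suppose for contradiction that some outermost disk $D$ in the height-function foliation of $S$ contains no point of $K\cap S$. Pushing $D$ just past its single interior critical point converts it to an embedded disk in $S^3\setminus K$ with $\partial D\subset\Sigma$, so $D$ realises a compressing disk for $\Sigma$ in $S^3\setminus K$ on one side, say the upper side. Tautness of $S$ forces $\partial D$ to be essential in $\Sigma\setminus K$: if $\partial D$ bounded a disk in $\Sigma\setminus K$, a disk-swap followed by a small isotopy would reduce $|S\cap\Sigma|$ while keeping $S$ essential, contradicting tautness. Because $\Sigma$ is a bridge sphere, $K$ meets the lower side of $\Sigma$ in trivial arcs, so there is a bridge disk (or, by a Morse-theoretic count on the foliation of $S$, a second puncture-free outermost disk) on the lower side whose boundary can be made disjoint from $\partial D$ after a small isotopy exploiting that $D$ is contained in a thin product region. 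That places a disk-bounding curve on the lower side within distance $1$ of $\partial D$ in $\mathcal{C}(\Sigma\setminus K)$, contradicting $d(\Sigma)\geq 3$.

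For (1), each standard saddle $\sigma$ for $S$ produces, via a push-off to an adjacent regular level, a curve $c_\sigma$ in $\Sigma\setminus K$ that bounds a compressing or bridge disk on a definite side of $\Sigma$. The definition of adjacency in Section \ref{structure} will be arranged so that two adjacent standard saddles yield disk-bounding curves on \emph{opposite} sides of $\Sigma$ that may be realized disjointly, hence are at distance $\leq 1$ in $\mathcal{C}(\Sigma\setminus K)$. Given a chain $\sigma$--$\rho$--$\tau$ of adjacent standard saddles, $c_\rho$ lies on the opposite side of $\Sigma$ from both $c_\sigma$ and $c_\tau$, and satisfies $d(c_\sigma,c_\rho)\leq 1$ and $d(c_\rho,c_\tau)\leq 1$. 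Pairing $c_\rho$ with $c_\sigma$ (or with $c_\tau$) then produces two disk-bounding curves on opposite sides of $\Sigma$ at distance $\leq 1$, again contradicting $d(\Sigma)\geq 3$.

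The hard part will be (2): bridging from a single puncture-free outermost disk on one side of $\Sigma$ to a companion disk-bounding curve on the opposite side requires careful use of the bridge structure of $K$. In particular, one must either rule out by tautness the possibility that every available companion curve fails to be essential or disjoint from $\partial D$, or else locate an appropriate bridge disk on the opposite side whose boundary can be made disjoint from $\partial D$; handling the exceptional configurations in which $\partial D$ runs close to arcs of $K$, together with verifying that ``standard'' and ``adjacent'' in Section \ref{structure} indeed encode the opposite-side disjointness needed for (1), will dominate the technical work.
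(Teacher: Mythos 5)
Your overall instinct for part (1) --- build a short path in the curve complex between disk-bounding curves on opposite sides of $\Sigma$ --- matches the paper, but both halves of your proposal have concrete gaps. For part (2), the paper does not use the distance hypothesis at all: an outermost disk $D_\sigma$ disjoint from $K$ is exactly an inessential saddle, and Lemma \ref{iness} (following Schultens) cancels that saddle against the unique maximum of $D_\sigma$ by an ambient isotopy that keeps $\Sigma$ $h$-level and $(K,S)$ in bridge position, reducing the saddle count and contradicting tautness directly. Your curve-complex route for (2) has two problems. First, tautness here means minimal number of saddles of $\digamma_S$ subject to $(K,S)$ being in bridge position with $\Sigma$ level; it does not minimize $|S\cap\Sigma|$, so your disk-swap argument that $\partial D$ must be essential in $\Sigma-K$ appeals to a minimality you do not have. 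Second, the step that ``places a disk-bounding curve on the lower side within distance $1$ of $\partial D$'' is unjustified: bridge disks for the lower trivial tangle generically intersect $\partial D$, and no ``thin product region'' is available to separate them. Since the statement is provable without distance, this detour is both harder and incomplete.

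For part (1), your claim that two adjacent standard saddles already yield opposite-side disk-bounding curves at distance $\leq 1$ proves too much: it would forbid \emph{any} adjacent pair of standard saddles, whereas the theorem only forbids a chain of three, and such pairs do occur. The paper's Lemma \ref{triple} needs the full chain. It first uses Lemmas \ref{iness}, \ref{remove}, and \ref{nesting} to reduce to the case where $\sigma$, $\rho$, $\tau$ are commonly nested with no inessential or removable saddles; then the push-offs $c^\sigma_1$, $c^\sigma_2$ and the curve $c_\tau$ all lie on a level sphere isotopic to $\Sigma$, with $c^\sigma_1\cup c^\sigma_2$ bounding a $K$-disjoint annulus below $\Sigma$ and $c^\sigma_2\cup c_\tau$ bounding one above. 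Boundary-compressing these annuli in the respective untangles produces disks $H_\sigma$ and $H_\rho$ on opposite sides whose boundaries are each disjoint from $c^\sigma_2$, giving a path of length $2$ and hence $d(\Sigma)<3$. The essentiality of $\partial H_\sigma$, $\partial H_\rho$ is not free: it is exactly where c-incompressibility of $S$ and the primeness and non-splitness of $K$ (via Bachman--Schleimer, themselves consequences of $d(\Sigma)\geq 3$) are used, and your proposal does not address it. You would need to supply the common-nesting reduction, the annulus/boundary-compression construction, and the essentiality claim to close the argument.
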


Bachman and Schleimer showed that twice the genus plus the number of boundary components of of an essential surface serves as an upper bound for the distance of any bridge sphere for a knot \cite{BSCH}. In other words, a high distance bridge sphere forces a high \emph{intrinsic} complexity for any essential embedded surface in a knot complement. In contrast, Theorem \ref{thm:struc} implies that if $K$ has a bridge sphere of distance 3 or greater, then any essential punctured sphere can be isotoped so that the number of saddles in the induced foliation is bounded with respect to the number of punctures. Colloquially, if there exists a bridge sphere that is not low distance, then every essential punctured sphere has low \emph{extrinsic} complexity.

\section{Preliminaries}\label{prelim}

In this paper we study smooth links in $S^3$. Our central tool will be the standard height function on $S^3$, $h:S^3\ra [-1,1]$. The level surfaces of $h$ foliate $S^3$ into concentric 2-spheres and two exceptional points. The \textbf{bridge number} of a link is the minimal number of maxima of $h|_K$ over all isotopic morse embeddings of $K$.

A \textbf{tangle} is an ordered pair $(B,T)$ where $B$ is a 3-ball and $T\subset B$ is a properly embedded collection of arcs and loops. An \textbf{untangle} is a tangle $(B,T)$ such that $T$ is a collection of boundary parallel arcs. We say an embedded surface in $S^3$ is $k$-punctured if it meets $K$ transversely in $k$ points. A \textbf{bridge sphere} for a link $K$ is an $k$-punctured sphere decomposing $(S^3,K)$ into two trivial tangles $(H_1,T_1)$ and $(H_2,T_2)$.

\begin{defin}
A bridge sphere $\Sigma$ is \textbf{h-level} if there exists a regular value $r$ such that $\Sigma$ is isotopic to $h^{-1}(r)$.
\end{defin}

Given an embedded morse surface $S$ in $S^3$, let $\digamma_{S}$ be the singular foliation on $S$ induced by $h|_{S}$. A \textbf{saddle} is any leaf of this foliation homeomorphic to the wedge of two circles. By standard position, we can assume that all saddles of $\digamma_{S}$ are disjoint from $K$.

\begin{defin}\label{pairbridge}Let $K$ be a link embedded in $S^3$ and $S$ be a surface embedded in $S^3$ which meets $K$ transversely. We say that the pair $(K,S)$ is in bridge position with respect to the standard height function on $S^3$ if $h$ is a morse function when restricted to both $K$ and $S$ and there exist $a,b \epsilon [-1,1]$ such that:

\begin{enumerate}
\item All maxima of $K$ and all maxima of $S$ lie in $h^{-1}((b,1))$
\item All minima of $K$ and all minima of $S$ lie in $h^{-1}((-1,a))$
\item All saddles of $S$ and all intersection points $S\cap K$ lie in $h^{-1}((a,b))$
\end{enumerate}

\end{defin}

\begin{lemma}\label{pairbridgeiso}
For any embeddings of $K$ and $S$ in $S^3$ there is an isotopy of first $S$ and subsequently $K$ such that the resulting pair $(K,S)$ is in bridge position. Moreover, these isotopies fix $S$ and $K$ outside of a neighborhood of their maxima and minima, preserve the number of maxima of $h|K$, and the number of saddles of $\digamma_S$.
\end{lemma}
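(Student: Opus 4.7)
The plan is to realize bridge position by a sequence of localized ambient isotopies that push the extrema of $h|_S$, and then of $h|_K$, out toward the poles while leaving the middle band intact. The basic tool is the following standard fact: any nondegenerate maximum of a Morse function on an embedded submanifold of $S^3$ admits a small ``cap'' neighborhood that can be lifted to any higher regular value by an ambient isotopy supported in an arbitrarily thin tubular neighborhood of that cap, and the symmetric statement holds for minima.

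First I isotope $S$. By a preliminary small perturbation I may assume $h|_S$ and $h|_K$ are Morse with all critical values of $h|_S$, all critical values of $h|_K$, and all heights of points in $S\cap K$ pairwise distinct; this step does not alter any of the counts the lemma promises to preserve. Fix regular values $a_0<b_0$ of $h|_S$ that strictly enclose every saddle of $\digamma_S$ and every point of $S\cap K$. For each local maximum $p$ of $h|_S$, the component of $S\cap h^{-1}([b_0,h(p)])$ containing $p$ is a disk $D_p\subset S$; by shrinking the cap if necessary I may assume $D_p\cap K=\emptyset$, since $p\notin K$ and $K$ is $1$-dimensional. A bump function supported in a thin ambient tubular neighborhood of $D_p$ that is disjoint from $K$ and from $S\setminus D_p$ generates an ambient isotopy of $S^3$ which lifts $D_p$ to any chosen height in $(b_0,1)$ while fixing both $K$ and $S\setminus D_p$. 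Performing this for each maximum of $h|_S$, and then the reflected construction to push each minimum below $a_0$, produces an isotoped $S$ satisfying conditions (1) and (3) of Definition \ref{pairbridge} with parameters $a_0,b_0$; the intersection set $S\cap K$ is unchanged because no cap was ever swept through $K$.

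With this new $S$ held fixed, I apply the same argument to $K$. For each maximum $q$ of $h|_K$, I choose a small subarc $\alpha_q\subset K$ about $q$ bounded by two points just below $h(q)$ and, by general position, disjoint from $S$. A tubular-neighborhood bump supported away from $S$ lifts $\alpha_q$ above $b_0$, and the symmetric operation pushes each minimum of $h|_K$ below $a_0$. This yields constants $a<a_0<b_0<b$ realizing all three conditions of Definition \ref{pairbridge}. Because each isotopy is supported in an arbitrarily small neighborhood of a single critical point and merely translates that point vertically, no critical points are created, destroyed, or merged, so the number of maxima of $h|_K$ and the number of saddles of $\digamma_S$ are preserved, and the isotopies fix $S$ and $K$ outside of neighborhoods of their maxima and minima. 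The only real delicacy, and the step I expect to require the most care, is the general-position argument that the cap and tube neighborhoods can be chosen to simultaneously avoid $K$, the rest of $S$, and each other; this follows from the compactness and discreteness of the critical loci together with the codimension count $1+2<3$ in $S^3$, which lets a generic small perturbation separate $K$ from any finite set of cap disks on $S$.
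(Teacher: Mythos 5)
Your overall strategy (perturb to Morse position, choose a band $[a_0,b_0]$ containing all saddles and puncture points, then push the extrema of $S$ and afterwards of $K$ out of the band one at a time by localized ambient isotopies) is the same as the paper's. But your ``basic tool'' is stated too strongly, and the gap it hides is exactly what the paper's proof has to work to overcome. It is not true that a cap around a maximum of $h|_S$ ``can be lifted to any higher regular value by an ambient isotopy supported in an arbitrarily thin tubular neighborhood of that cap'': to raise the cap from height $h(p)$ to a height above $b_0$, the isotopy must be supported in a neighborhood of a region joining $D_p$ to the target level, and that region must avoid the rest of $S$ and all of $K$. Avoiding $K$ is a genuine general-position statement (your codimension count applies there), but avoiding the rest of $S$ is not: $S$ has codimension one, so a generic arc from $p$ up to $h^{-1}(b_0)$ will cross any sheets of $S$ hanging above the cap, and no small perturbation removes those crossings. (Also, as written, the set $S\cap h^{-1}([b_0,h(p)])$ is vacuous for exactly the maxima that need to be moved, namely those with $h(p)<b_0$.)

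The paper closes this gap with an ordering argument: it always moves the \emph{highest} maximum $M_1$ of $S$ lying below $h^{-1}(b)$. Between $h(M_1)$ and $b$ the surface $S$ then has no critical points at all, so $S\cap h^{-1}((h(M_1),b])$ is a union of monotone annuli whose complementary components all reach the level $h^{-1}(b)$; this is what guarantees a monotone arc $\alpha$ from $M_1$ to a point above $h^{-1}(b)$ whose interior misses both $S$ and $K$, and the isotopy is then supported in a neighborhood of $\alpha$ together with the cap. The same issue recurs, and is handled the same way, when the maxima of $K$ are subsequently raised past $S$ and past other strands of $K$. You need to either import this highest-extremum-first induction or give some other reason the swept region can be chosen disjoint from $S$; without that, the central step of your argument is unjustified.
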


\begin{proof}
After a small isotopy, we can assume that $h|_{K}$ and $h|_S$ are morse functions and $S$ intersects $K$ transversely. Additionally, by general position, we can assume that both $K$ and $S$ are disjoint from both $h^{-1}(1)$ and $h^{-1}(-1)$. Let $b_o$ be the largest value among the heights of all saddles of $\digamma_S$ and all points of $S\cap K$. Let $a_o$ be the smallest value among the heights of all saddles of $\digamma_S$ and all points of $S\cap K$. Let $a = a_o - \frac{1}{2}(1+a_o)$ and $b = b_o + \frac{1}{2}(1-b_o)$. Let $M_1$ be the highest maximum of $S$ that lies below $h^{-1}(b)$. Let $\alpha$ be a monotone arc connecting $M_1$ to any point in $S^3$ above $h^{-1}(b)$. Since $M_1$ is the highest maximum below $h^{-1}(b)$, we can choose $\alpha$ so that the interior of $\alpha$ is disjoint from both $K$ and $S$. The portion of the boundary of a regular neighborhood of $\alpha$ lying above $M_1$ is a monotone disk $D$ such that $D$ is disjoint from $K$ and $S$ except in its boundary. Let $D_M$ be a regular neighborhood of $M_1$ in $S$.  $D$ together with $D_M$ co-bound a 3-ball whose intersection with $S$ is $D_M$. Isotope $D_M$ to $D$ across this 3-ball. This isotopy fixes $K$, is supported in a neighborhood of $M_1$ in $S$, and raises one maximum of $S$ above $h^{-1}(b)$. Repeat this process until all maxima of $S$ lie above $h^{-1}(b)$. Now that there are no maxima of $S$ between $h^{-1}(a)$ and $h^{-1}(b)$ we can similarly isotope all the maxima of ${K}$ above $h^{-1}(b)$ via an isotopy that fixes $S$ and is supported on a neighborhood of the maxima of $h_K$. Hence, we have achieved $(1)$ in the definition of $(K,S)$ bridge position while preserving the number of maxima of $K$ and the number of saddles of $S$. By a symmetric argument, we can also achieve $(2)$ in the definition of $(K,S)$ bridge position while preserving the number of maxima of $K$ and the number of saddles of $S$. After these isotopies, our choice of $a$ and $b$ guarantee that $(3)$ in the definition of $(K,S)$ bridge position is satisfied.
\end{proof}

\begin{defin}
A punctured surface $S$ is \textbf{taut} with respect to an $h$-level bridge sphere $\Sigma$ if $\digamma_{S}$ contains the fewest number of saddles subject to $(K,S)$ being in bridge position.
\end{defin}

This notion of taut is different then that found in \cite{B}. Specifically, a taut surface in \cite{B} is one that has a minimal number of saddles subject to $h_{K}$ having a minimal number of maxima. In this paper, a taut surface has a minimal number of saddles subject to a specified bridge sphere appearing as a level sphere of the height function.

\section{The Saddle Structure of n-Punctured Spheres}\label{structure}

In this section we use the notion of taut introduced in the previous section to develop constraints on $\digamma_S$ when $S$ is an embedded essential punctured sphere. For a more detailed discussion of the following definitions and their applications see \cite{B}.

Any given saddle $\sigma = s_{1}^{\sigma} \vee s_{2}^{\sigma}$, lies in a level sphere $S_{\sigma}=h^{-1}(h(\sigma))$. Let $D_{1}^{\sigma}$ be the closure of the component of $S_{\sigma}-s_{1}^{\sigma}$ that is disjoint from $s_{2}^{\sigma}$ and $D_{2}^{\sigma}$ be the closure of the component of $S_{\sigma}-s_{2}^{\sigma}$ that is disjoint from $s_{1}^{\sigma}$.

A subdisk $D$ in $\digamma_{S}$ is monotone if its boundary is entirely contained in a leaf of $\digamma_{S}$ and the interior of $D$ is disjoint from every saddle in $\digamma_{S}$. In practice, we will use the term subdisk in a slightly broader sense, allowing $\partial(D)$ to be immersed in $S$ (i.e. $\partial(D)$ is a saddle). We say a monotone disk is \textbf{outermost} if its boundary is $s_{i}^{\sigma}$ for some saddle $\sigma$ and label the disk $D_{\sigma}$. Similarly, if $s_{i}^{\sigma}$ bounds an outermost disk $D_{\sigma}$, we say $\sigma$ is an outermost saddle. It is usually the case that only one of $s_{1}^{\sigma}$ and $s_{2}^{\sigma}$ is the boundary of an outermost disk, so, our convention is to relabel so that $\partial(D_{\sigma}) = s_{1}^{\sigma}$. We say $\sigma$ is an \textbf{inessential saddle} if $\sigma$ is an outermost saddle and $D_{\sigma}$ is disjoint from $K$.

Suppose $\sigma$ is an outermost saddle. $S_{\sigma}$ cuts $S^{3}$ into two 3-balls. The one that contains $D_{\sigma}$ is again cut by $D_{\sigma}$ into two 3-balls $B_{\sigma}$ and $B'_{\sigma}$. We chose the labeling of $B_{\sigma}$ and $B'_{\sigma}$ so that $\partial(B_{\sigma})=D_{1}^{\sigma} \cup D_{\sigma}$. We say a saddle $\sigma$ is \textbf{standard} if there is a monotone disk $E_{\sigma}$ in $S$ such that $\partial(E_{\sigma}) = \sigma$ and $E_{\sigma}$ is disjoint from $K$.

By general position arguments, we can assume every saddle $\sigma$
in $\digamma_{S}$ has a bicollared neighborhood in $S$ that is
disjoint from $K$ and all other singular leaves of $\digamma_{S}$.  The boundary of this bicollared neighborhood
consists of three circles $c_{1}^{\sigma}$, $c_{2}^{\sigma}$, and $c_{3}^{\sigma}$ where
$c_{1}^{\sigma}$ and $c_{2}^{\sigma}$ are parallel to $s_{1}^{\sigma}$ and $s_{2}^{\sigma}$
respectively. We can assume $c_{1}^{\sigma}$, $c_{2}^{\sigma}$, and $c_{3}^{\sigma}$ are level
with respect to $h$ and that $c_{1}^{\sigma}$ and $c_{2}^{\sigma}$ lie in the same
level surface.

Figure 1 illustrates all of the terminology outlined above.

\begin{figure}[h]
\centering \scalebox{1.2}{\includegraphics{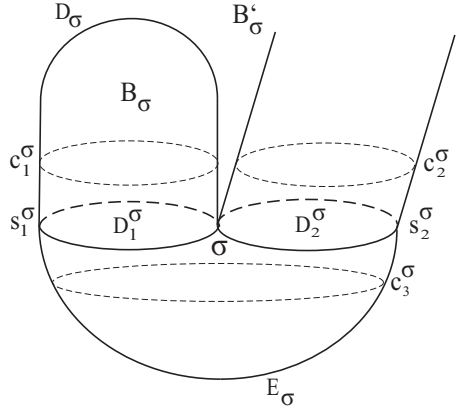}}
\caption{$\sigma$ is a standard, outermost saddle}\label{fig:labels1.eps}
\end{figure}

For the remainder of the paper $S$ will always denote a punctured sphere. We will denote the point $h^{-1}(1)$ as $+\infty$ and the point $h^{-1}(-1)$ as $-\infty$

\begin{lemma}\label{inftyball}Let $\sigma$ be an outermost saddle in $\digamma_{S}$.
 There is an ambient isotopy of $S$ that fixes $K$, lowers minima of $S$, raises maxima of $S$, and fixes $S$ outside of a neighborhood of the maxima and minima of $S$ such that, after this isotopy, $B_{\sigma}$ does not contain $+\infty$ or $-\infty$.
\end{lemma}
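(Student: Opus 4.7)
By the vertical symmetry of $h$, we may assume $D_\sigma$ lies above $S_\sigma$; the case below is handled by the mirror argument using $-\infty$ and lowering minima. With this setup $B_\sigma\subset h^{-1}((h(\sigma),1])$, so $-\infty\notin B_\sigma$ automatically, and the task reduces to arranging $+\infty\notin B_\sigma$. A standard Morse-theoretic count (using $\chi(D_\sigma)=1$, that $D_\sigma$ has no interior saddles, and that $\partial D_\sigma$ lies in a single level) shows that $D_\sigma$ has a unique interior maximum $M$, which is also a maximum of $S$.

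Suppose $+\infty\in B_\sigma$. Choose $\epsilon>0$ small enough that $N:=h^{-1}((1-\epsilon,1])$ contains no maxima of $S$ besides (possibly) $M$ and no points of $S\cap K$; then $N$ is an open round $3$-ball about $+\infty$. Following the construction in Lemma \ref{pairbridgeiso}, I choose a monotone arc $\gamma$ issuing from $M$ with strictly increasing height, ending at a point $M'\in N$ of height $1-\epsilon/2$, chosen generically to be disjoint from $K$ and from $S\setminus\{M\}$. The crucial additional requirement is that $\gamma$ be routed so that a regular neighborhood $U$ of $\gamma$ contains the point $+\infty$. If other maxima of $S$ obstruct a monotone path from $M$ to $N$, I first raise each such obstructing maximum via its own finger move, iteratively clearing the route while preserving the saddle structure of $S$.

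I then apply the finger move of Lemma \ref{pairbridgeiso} supported in $U$: a regular neighborhood $D_M\subset D_\sigma$ of $M$ is isotoped across the $3$-ball $W\subset U$ co-bounded by $D_M$ and the monotone disk capping $U$ above, landing on a new monotone cap $\tilde D_M$ with apex $M'$. This isotopy fixes $K$, raises $M$ to $M'$, is supported near $M$, and preserves $\digamma_S$. Because $+\infty\in W$, the moving disk crosses $+\infty$ exactly once during the sweep, flipping the side of $D_\sigma$ on which $+\infty$ lies. In the final configuration, the new outermost disk $\tilde D_\sigma=(D_\sigma\setminus D_M)\cup\tilde D_M$ is still a monotone cap bounded by $s_1^\sigma$, with apex $M'$ at height $<1$; the new ball $\tilde B_\sigma$ (bounded by $D_1^\sigma$ and $\tilde D_\sigma$) lies on the side of $\tilde D_\sigma$ below the apex, while $+\infty$, at height $1>h(M')$, lies in the opposite component $\tilde B'_\sigma$. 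Hence $+\infty\notin\tilde B_\sigma$, as required.

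The main obstacle is the construction of $\gamma$ together with the neighborhood $U\ni+\infty$, all disjoint from $K$ and from $S\setminus\{M\}$, and the verification that the sweep across $+\infty$ is transverse and occurs exactly once. This is handled by a general-position argument within $N$, plus preliminary finger moves on any other obstructing maxima, and uses the fact that above $h^{-1}(b)$ the surface $S$ consists of disjoint monotone caps around its maxima, so obstructions are locally easy to route around.
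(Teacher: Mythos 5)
Your argument is essentially correct, and it is worth noting that the paper itself offers no proof here: it simply cites Lemma~1 of \cite{JSCH01}, whose argument is the same finger-move idea you reconstruct. Your reduction to the case $D_\sigma$ above $S_\sigma$ (so $-\infty\notin B_\sigma$ for free), the Morse count giving a unique maximum $M$ of $D_\sigma$, and the key move --- raising $M$ by a finger whose swept ball $W$ contains $+\infty$, so that $\phi^{-1}(+\infty)$ lies just below $D_M$, hence in $B'_\sigma$, hence $+\infty\in\phi(B'_\sigma)=\tilde B'_\sigma$ and $+\infty\notin\tilde B_\sigma$ --- is the right mechanism, and your route-clearing step is exactly the ``raise the highest maximum first'' device already used in Lemma~\ref{pairbridgeiso}.

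Two small points deserve tightening. First, you need $N$ disjoint from all of $K$, not merely from $S\cap K$; since $K$ is compact and misses the poles this costs nothing, but it is what guarantees the fattened neighborhood $U\ni+\infty$ avoids $K$ so the isotopy fixes $K$. Second, each finger move whose support contains $+\infty$ toggles which side of $D_\sigma$ the pole lies on, so you must control the parity: arrange (by general position, since $+\infty$ is a single point) that all the preliminary clearing fingers miss $+\infty$, and that only the final finger on $M$ engulfs it, and only when $+\infty\in B_\sigma$ to begin with. Relatedly, your stated justification that ``$\tilde B_\sigma$ lies on the side of $\tilde D_\sigma$ below the apex'' is not by itself a reason --- which level disk caps off the inside of the new cap is exactly the thing being controlled --- but the preceding computation via $\phi^{-1}(+\infty)$ already gives the conclusion, so this is a matter of exposition rather than a gap.
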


\begin{proof} See Lemma 1 \cite{JSCH01}. \end{proof}

The following lemma is an extension of Lemma 2 \cite{JSCH01} to our alternative notion of taut.

\begin{lemma}\label{iness}Suppose $\Sigma$ is an $h$-level bridge sphere for a link $K$. If $\digamma_S$ contains an inessential saddle, then $S$ is not taut with respect to $\Sigma$.
\end{lemma}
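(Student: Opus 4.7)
The plan is to adapt the argument of Lemma 2 in \cite{JSCH01} to the modified notion of tautness; the key difference is that we are no longer required to preserve the number of maxima of $h|_K$, only to keep the pair in bridge position with the original level bridge sphere $\Sigma$. The strategy is to cancel the inessential saddle $\sigma$ against the unique extremum of $h|_S$ lying in the outermost monotone disk $D_\sigma$ by sweeping $D_\sigma$ through the 3-ball $B_\sigma$, producing a surface with strictly fewer saddles.

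First I would apply Lemma \ref{inftyball} to arrange that $B_\sigma$ contains neither $+\infty$ nor $-\infty$; this isotopy preserves bridge position and the saddle count of $\digamma_S$. Next, note that $\partial B_\sigma = D_\sigma \cup D_1^\sigma$, where $D_\sigma \cap K = \emptyset$ by inessentiality and $D_1^\sigma \subset S_\sigma$ is a planar disk lying at height $h(\sigma)$. Consequently $K \cap \partial B_\sigma \subset D_1^\sigma$, and $K \cap B_\sigma$ is a collection of properly embedded arcs (with endpoints on $D_1^\sigma$) together with possibly some closed loops. The central step is to construct an ambient isotopy supported in a neighborhood of $B_\sigma$ that pushes $D_\sigma$ just past $D_1^\sigma$, dragging $K \cap B_\sigma$ into a thin collar of $D_1^\sigma$. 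After this isotopy the saddle $\sigma$ has cancelled with the extremum of $h|_S$ contained in $D_\sigma$, so the number of saddles of $\digamma_S$ strictly decreases.

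The cancellation isotopy may introduce new maxima or minima of $h|_K$ and misplace points of $K \cap S$ within the collapsed region, so I would then apply Lemma \ref{pairbridgeiso} to restore $(K,S)$ to bridge position. That lemma introduces no new saddles, and its supporting isotopies are confined to neighborhoods of the maxima and minima of $K$ and $S$, none of which meet the level sphere $\Sigma = h^{-1}(r_0)$ (which sits strictly between the extrema). Hence $\Sigma$ remains a level bridge sphere for $K$ at the same height, while the resulting surface is in bridge position with fewer saddles than $S$, contradicting the hypothesis that $S$ is taut with respect to $\Sigma$.

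The main obstacle is making the sweeping step in the second paragraph rigorous when $K \cap B_\sigma$ is nonempty: one must verify that the arcs and loops of $K$ inside $B_\sigma$ can be carried along with $D_\sigma$ by an ambient isotopy of $S^3$ without creating new saddles on $S$, and that the output is close enough to bridge position for Lemma \ref{pairbridgeiso} to repair. This is exactly where the relaxed notion of tautness pays off: unlike in \cite{JSCH01}, we are permitted to increase the number of maxima of $h|_K$ during the sweep, since only the saddle count and the height of $\Sigma$ need to be preserved, and the argument reduces to Schultens's original saddle-extremum cancellation inside the 3-ball $B_\sigma$.
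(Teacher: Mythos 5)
Your proposal follows essentially the same route as the paper's proof: apply Lemma \ref{inftyball} so that $B_\sigma$ misses $\pm\infty$, cancel the inessential saddle against the unique extremum of $h|_S$ in $D_\sigma$ by an ambient isotopy that drags $K\cap B_\sigma$ along, then restore bridge position with Lemma \ref{pairbridgeiso}, relying on the fact that the relaxed notion of tautness tolerates new maxima of $h|_K$ while keeping $\Sigma$ level. The paper makes your ``sweeping step'' precise by first horizontally shrinking and vertically lowering $B_\sigma$ into a thin slab just above the saddle level and then cancelling the saddle with the maximum along a vertical disk, which is exactly the Schultens-style cancellation you invoke.
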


\begin{proof}
Suppose $(K,S)$ is in bridge position with respect to $\Sigma$, an $h$-level bridge sphere. Let $\sigma$ be an inessential saddle in $\digamma_S$. We can assume $D_{\sigma}$ contains a unique maximum and, by Lemma \ref{inftyball}, $B_{\sigma}$ does not contain $+\infty$. Let $(a,b)$ be the interval in the definition of $(K,S)$ bridge position. There exists an open interval $(p,q)$ such that $h(\sigma)\epsilon (p,q) \subset (a,b)$ and $h^{-1}(p,q)$ contains no saddles, maxima or minima of $\digamma_S$, no maxima or minima of $K$ and no points of $K\cap S$ other than $\sigma$. Let $s=h(\sigma)$. Horizontally shrink and vertically lower $B_{\sigma}$ so that the result of the isotopy, call it $B^*_{\sigma}$ is contained in $h^{-1}([s,q))$. This isotopy preserves the saddle structure of $\digamma_S$ and the number of critical values of $h_K$.

Let $S^{*}$ be the image of $S$ under this isotopy. Similarly, let $D^{*}_{\sigma}$ be the image of $D_{\sigma}$ under this isotopy and let $m$
be the unique maximum of $D^{*}_{\sigma}$. Let $M$ be the level
surface containing $m$. Hence, $M \cap S^{*}$ consists of
the point $m$ and a collection of circles.  One such circle $c_{2}$
is parallel in $S^{*}$ to $s_{2}$. Since the region between $M$ and $S_{\sigma}$ outside of $B^*_{\sigma}$ meets $S$ and $K$ in a collection of monotone annuli and vertical arcs respectively, we can choose a point $n$ in $c_{2}$ and an arc
$\alpha$ in $M$ that is disjoint from $S^{*}$ except at its boundary
$\{m,n\}$. Additionally, let $\beta$ be an arc in $S^{*}$ that does not meet
$K$, has boundary $\{m,n\}$ and is transverse to $\digamma_{C}$
everywhere accept where it passes through $s_{1} \cap s_{2}$ so that $\alpha$ and $\beta$ cobound a vertical disk $F$ that is disjoint from $K$ and disjoint from $S$ except
along $\beta$. Isotope $S^{*}$ along $F$ to effectively cancel a saddle
with a maximum. See Figure \ref{fig: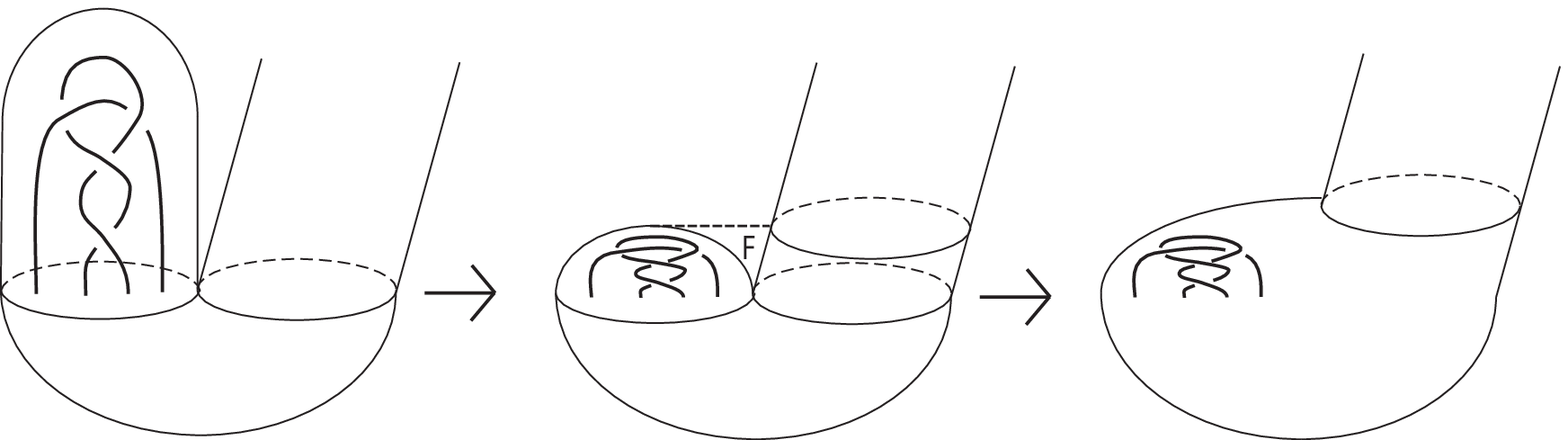}. Let $S^{**}$ be the image of $S^{*}$ under this second isotopy. Notice that the ambient isotopies described fix all of $S^3$ below $h^{-1}(s)$, hence, $h^{-1}(s)$ remains a bridge sphere for $K$ isotopic to $\Sigma$. The only way for $(K^{**},S^{**})$ to fail to be in bridge position is for maxima of $S^{**}$ or $K$ to lie below a saddle of $S^{**}$ or a point of $K\cap S$. In this case use the isotopy from Lemma \ref{pairbridgeiso} to raise the maxima of $S^{**}$ and subsequently the maxima of $K$. Thus, we have produced an ambient isotopy of $S$ and $K$ which reduces the number of saddles of $S$ but preserves both $\Sigma$ as an $h$-level bridge sphere and $(K,S)$ in bridge position. Hence $S$ is not taut with respect to $\Sigma$.
\end{proof}

\begin{figure}[h]
\centering \scalebox{.75}{\includegraphics{thm1fig1.eps}}
\caption{}\label{fig:thm1fig1.eps}
\end{figure}

\begin{defin}We say $\sigma$ is a \textbf{removable saddle} if
$\sigma$ is an outermost saddle where $D_{\sigma}$ has a unique maximum(minimum) and $h|_{K \cap B_{\sigma}}$ has a
local end-point maximum(minimum) at every point of $K \cap D_{\sigma}$. See Figure \ref{fig: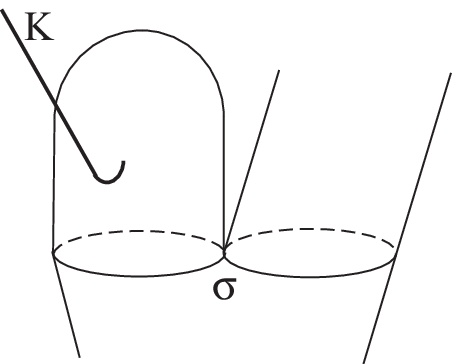}. Otherwise, we say $\sigma$ is non-removable. \end{defin}

\begin{figure}[h]
\centering \scalebox{1.2}{\includegraphics{removeable.eps}}
\caption{}\label{fig:removeable.eps}
\end{figure}

The following is an adaptation of Lemma 3 of \cite{B} to the notion of taut presented here.

\begin{lemma}\label{remove}Suppose $\Sigma$ is an $h$-level bridge sphere for a link $K$. If $\digamma_S$ contains a removable saddle, then $S$ is not taut with respect to $\Sigma$.\end{lemma}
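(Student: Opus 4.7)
The plan is to adapt the isotopy in the proof of Lemma~\ref{iness} to the removable setting. Suppose $\sigma$ is a removable saddle; by symmetry we treat the maximum case, so $D_\sigma$ carries a unique maximum of $h|_S$ and every point of $K\cap D_\sigma$ is a local end-point maximum of $h|_{K\cap B_\sigma}$. By Lemma~\ref{inftyball}, we may assume $B_\sigma$ contains neither $+\infty$ nor $-\infty$. Choose an open interval $(p,q)\ni h(\sigma)$ small enough that $h^{-1}(p,q)$ contains no critical values of $h|_S$ or $h|_K$ besides $h(\sigma)$, and no points of $K\cap S$ in its interior other than $\sigma$, exactly as in the proof of Lemma~\ref{iness}.

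The key step is a single ambient isotopy that simultaneously horizontally shrinks and vertically lowers the ball $B_\sigma$, carrying the arcs of $K\cap B_\sigma$ along with it, so that the image $B_\sigma^*$ lies entirely in $h^{-1}([h(\sigma),q))$ and the image $D_\sigma^*$ is a nearly flat monotone disk with a unique maximum $m$ close to the level $h(\sigma)$. The removable condition is exactly what guarantees this isotopy can be carried out without creating new critical points of $h|_K$: since every endpoint of $K\cap B_\sigma$ lying on $D_\sigma$ is already an end-point maximum, the isotopy merely drops each such endpoint downward as $D_\sigma$ descends, and no new maxima or minima of $h|_K$ appear. Once $B_\sigma^*$ is in this thin position, we follow the cancellation step in the proof of Lemma~\ref{iness} verbatim: choose a point $n\in c_2^\sigma$ in the level surface through $m$, an arc $\alpha$ in that level surface from $m$ to $n$ meeting $S^*$ only at its endpoints, and an arc $\beta$ in $S^*$ from $m$ to $n$ passing once through $\sigma$, cobounding a vertical disk $F$ disjoint from $K$ and meeting $S^*$ only along $\beta$. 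Isotoping $S^*$ across $F$ cancels $\sigma$ with $m$, producing a surface $S^{**}$ with one fewer saddle than $S$.

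All of these isotopies are supported in $h^{-1}((p,q))$ and so fix $S^3$ below $h^{-1}(p)$, which ensures that $\Sigma$ remains an $h$-level bridge sphere after the modification. If maxima of $S^{**}$ or of $K$ have been pushed below a saddle of $\digamma_{S^{**}}$ or a point of $K\cap S^{**}$, Lemma~\ref{pairbridgeiso} restores the pair to bridge position without altering the saddle count or the number of maxima of $h|_K$. We thereby obtain an admissible configuration with strictly fewer saddles than $S$, which shows $S$ is not taut with respect to $\Sigma$. The main obstacle is justifying the simultaneous shrink-and-lower isotopy of the previous paragraph: one must verify that the arcs of $K\cap B_\sigma$ can be carried inside the collapsing ball without introducing new critical points of $h|_K$, and the removable hypothesis --- ruling out precisely the configurations of $K\cap B_\sigma$ that would obstruct such a compression --- is what makes this possible.
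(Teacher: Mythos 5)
Your proposal is correct and follows essentially the same route as the paper: apply the shrink-and-lower isotopy of $B_\sigma$ from the proof of Lemma~\ref{iness}, observe that the removability hypothesis (each point of $K\cap D_\sigma$ being a local end-point maximum of $h|_{K\cap B_\sigma}$) guarantees that dragging the punctures down creates monotone trailing subarcs of $K$ and hence no new critical points of $h|_K$, cancel the saddle with the maximum of $D_\sigma$, and invoke Lemma~\ref{pairbridgeiso} to restore bridge position, contradicting tautness. The paper's proof is terser but identical in substance, so no further comparison is needed.
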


\begin{proof}
Suppose $(K,S)$ is in bridge position with respect to $\Sigma$, an $h$-level bridge sphere. Let $\sigma$ be an removable saddle in $\digamma_S$. We can assume $D_{\sigma}$ contains a unique maximum and, by Lemma \ref{inftyball}, $B_{\sigma}$ does not contain $+\infty$. Applying the isotopy presented in Lemma \ref{iness}, we see that each point $x_i\epsilon K\cap D_{\sigma}$ together with the image of $x_i$ in $B^*_{\sigma}$ bound monotone subarcs of $K^*$. See Figure \ref{fig: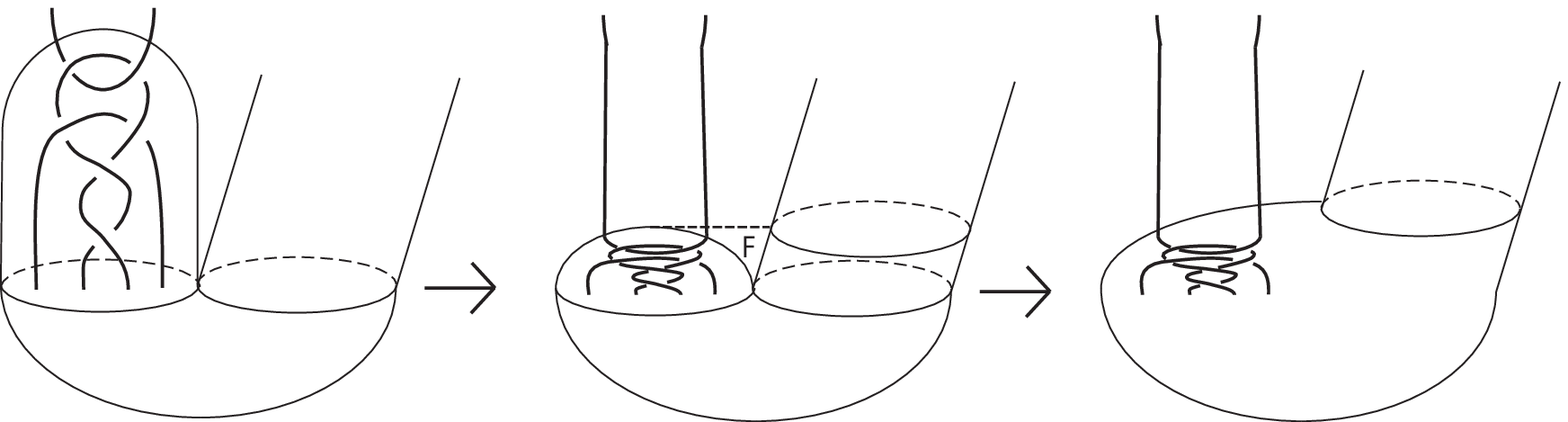}. Since $\sigma$ is removable neither the $x_i$ nor the $x^{*}_i$ is a maximum or minimum. Thus, we have eliminated a saddle of $S$ while preserving $(K,S)$ bridge position, a contradiction to tautness of $S$.

\end{proof}

\begin{figure}[h]
\centering \scalebox{.7}{\includegraphics{tightcase2.eps}}
\caption{}\label{fig:tightcase2.eps}
\end{figure}

$S$ decomposes $S^{3}$ into two 3-balls $B_{1}$ and $B_{2}$. Let $\sigma$ be a saddle in $\digamma_{S}$ and
$L$ be the level sphere containing $c_{1}^{\sigma}$ and $c_{2}^{\sigma}$.
$L-(c_{1}^{\sigma} \cup c_{2}^{\sigma})$ is composed of two disks and an annulus $A$.
If a collar of $\partial(A)$ in $A$ is contained in $B_{1}$, then we
say $\sigma$ is \textbf{unnested} with respect to $B_{1}$. If not, we say
$\sigma$ is nested with respect to $B_{1}$. We define nested and
unnested with respect to $B_{2}$ similarly.  Note that nested with
respect to $B_{1}$ is the same as unnested with respect to $B_{2}$
and nested with respect to $B_{2}$ is unnested with respect to
$B_{1}$.

Two saddles $\sigma = s_{1}^{\sigma}
\vee s_{2}^{\sigma}$ and $\tau =s_{1}^{\tau} \vee s_{2}^{\tau}$ in $\digamma_{S}$ are
\textbf{adjacent} if, up to labeling, $s_{i}^{\sigma}$ and $s_{j}^{\tau}$
cobound a monotone annulus in $S$ that is disjoint from $K$.

The following lemma is an extension of Lemma 3 in \cite{JSCH01} to our alternative notion of taut.

\begin{lemma}\label{nesting}
Suppose $\Sigma$ is an $h$-level bridge sphere for a link $K$. If $\digamma_S$ contains adjacent saddles $\sigma$ and $\tau$ where $\sigma$ is a standard saddle and $\sigma$ and $\tau$ are nested with respect to
different 3-balls, then $S$ is not taut with respect to $\Sigma$.
\end{lemma}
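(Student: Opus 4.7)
My plan is to argue by contradiction: assume $S$ is taut and exhibit an ambient isotopy that preserves $\Sigma$ as an $h$-level bridge sphere and $(K,S)$ in bridge position but strictly decreases the number of saddles of $\digamma_{S}$. Let $A\subset S$ be the monotone annulus, disjoint from $K$, witnessing the adjacency of $\sigma$ and $\tau$; after relabeling, assume $\partial A = s_{1}^{\sigma}\cup s_{1}^{\tau}$. Because $\sigma$ is standard, fix a monotone disk $E_{\sigma}\subset S$ with $\partial E_{\sigma}=\sigma$ and $E_{\sigma}\cap K=\emptyset$. Then $E_{\sigma}$ meets $A$ along $s_{1}^{\sigma}$, so $F:=E_{\sigma}\cup A$ is a monotone subsurface of $S$, disjoint from $K$, with immersed boundary $s_{2}^{\sigma}\cup s_{1}^{\tau}$.

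The opposing-nesting hypothesis is used next to produce a 3-ball $W\subset S^{3}$ lying entirely in one of $B_{1}$ or $B_{2}$, whose boundary is built from $F$ together with annular regions of the level spheres $L_{\sigma}=h^{-1}(h(\sigma))$ and $L_{\tau}=h^{-1}(h(\tau))$. Concretely, say $\sigma$ is nested with respect to $B_{1}$ and $\tau$ with respect to $B_{2}$; then the collar-annulus component of $L_{\sigma}-(c_{1}^{\sigma}\cup c_{2}^{\sigma})$ lies in $B_{2}$, while the collar-annulus component of $L_{\tau}-(c_{1}^{\tau}\cup c_{2}^{\tau})$ lies in $B_{1}$. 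Capped against $F$ and small pieces of $S$ near the two saddles, these two annuli assemble into $\partial W$. The opposing-nesting is precisely what forces $W$ to lie on a single side of $S$; same-side nesting would yield a region straddling $S$ and the argument would break.

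Pushing $F$ across $W$ by an ambient isotopy produces a new surface $S'$ whose induced foliation has strictly fewer saddles: both $\sigma$ and $\tau$ are eliminated, or the push creates either an inessential or a removable saddle, at which point Lemmas \ref{iness} and \ref{remove} complete the reduction. Because the support of the push is contained in a neighborhood of $W$, lying in the short height band between $h(\sigma)$ and $h(\tau)$, we can localize the isotopy along vertical disks as in the proofs of Lemmas \ref{iness} and \ref{remove} so that the bridge sphere $\Sigma$ remains a level sphere. Applying Lemma \ref{pairbridgeiso} if necessary to restore $(K,S')$ to bridge position without changing the saddle count, we obtain the desired contradiction with tautness of $S$.

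The main technical obstacle is twofold: first, constructing the 3-ball $W$ inside a single $B_{i}$ — this is exactly where the opposing-nesting hypothesis is essential, and care is needed to ensure the collar-annuli in $L_{\sigma}$ and $L_{\tau}$ join up with $F$ without introducing unexpected intersections with the rest of $S$ or with $K$; second, arranging that the push-across-$W$ isotopy leaves $\Sigma$ fixed as a level sphere, which forces us to localize carefully or to reduce to the inessential/removable-saddle cases already handled. Once these points are verified, the contradiction to tautness is immediate.
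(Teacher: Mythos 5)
Your overall strategy is the right one and matches the paper's: assemble $E_{\sigma}$ and the adjacency annulus $A$ into a monotone piece of $S$, cap it off with level-sphere pieces to bound a $3$-ball, and push across that ball to cancel a saddle, invoking Lemma \ref{pairbridgeiso} afterward to restore bridge position. But the step you defer as ``the main technical obstacle'' is the entire content of the lemma, and the sketch you give of it is wrong in two specific ways. First, the capping pieces are not annular regions of $L_{\sigma}$ and $L_{\tau}$: your surface $F=E_{\sigma}\cup A$ has boundary $s_{2}^{\sigma}\cup s_{1}^{\tau}$, and gluing annuli to those two circles leaves you with new uncapped boundary, so the object you describe never closes up into $\partial W$. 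What actually works is to cap $s_{2}^{\sigma}$ with the level \emph{disk} $D_{2}^{\sigma}\subset S_{\sigma}$; after a small tilt of $D_{2}^{\sigma}$, the union $A\cup E_{\sigma}\cup D_{2}^{\sigma}$ is a single monotone disk with boundary $s_{1}^{\tau}$, and this disk together with the level disk $D_{1}^{\tau}$ cobounds the $3$-ball to which the shrink-and-lower isotopy of Lemma \ref{iness} is applied, cancelling $\tau$ against the unique maximum of $E_{\sigma}$.

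Second, and more seriously, your claim that the opposing-nesting hypothesis forces $W$ to lie entirely in one of $B_{1}$ or $B_{2}$ is false, and it mislocates where that hypothesis is used. The ball in question generally does meet both $K$ and $S$ in its interior, since $K$ and $S$ can pass through the level disks $D_{2}^{\sigma}$ and $D_{1}^{\tau}$. If $W$ really were disjoint from $K$ and from the rest of $S$, the push-across would trivially preserve bridge position, but that is not the situation; the necessary (and nontrivial) verification is that the isotopy drags the intersections along monotonically, so that each point of $K\cap D_{2}^{\sigma}$ sweeps out a monotone subarc of $K$ and each circle of $S\cap D_{2}^{\sigma}$ sweeps out a monotone annulus in $S$, creating no new maxima of $K$ and no new saddles of $\digamma_{S}$. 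The opposite-nesting hypothesis is instead what guarantees that $D_{2}^{\sigma}$ attaches to $E_{\sigma}\cup A$ on the correct side, so that $A\cup E_{\sigma}\cup D_{2}^{\sigma}$ is an embedded monotone disk at all; with common nesting the level disk sits on the wrong side and no such monotone disk exists. As written, your argument asserts a disjointness that fails and omits the monotone-dragging verification that replaces it, so the reduction in saddle number is not established.
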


\begin{proof}
Do to the symmetry of the argument we can assume that $E_{\sigma}$ has a unique maximum. Since $\sigma$ and $\tau$ are adjacent, then, up to relabeling, $s^{\sigma}_{1}$ and $s^{\tau}_{1}$ cobound a monotone annulus $A$ in $S$ that is disjoint from $K$. After a small isotopy of $K$ and $S$, we can assume that $K \cup S$ meets $D^{\sigma}_{2}$ in a collection of points and simple closed curves. After a small tilt of $D^{\sigma}_{2}$, $A\cup E_{\sigma} \cup D^{\sigma}_{2}$ is a monotone disk. Eliminate the saddle $\tau$ by applying the isotopy from the proof of Lemma \ref{iness} to the 3-ball cobounded by the monotone disk $A\cup E_{\sigma} \cup D^{\sigma}_{2}$ and the level disk $D^{\tau}_1$ that is disjoint from $D^{\tau}_{2}$. See Figure \ref{fig: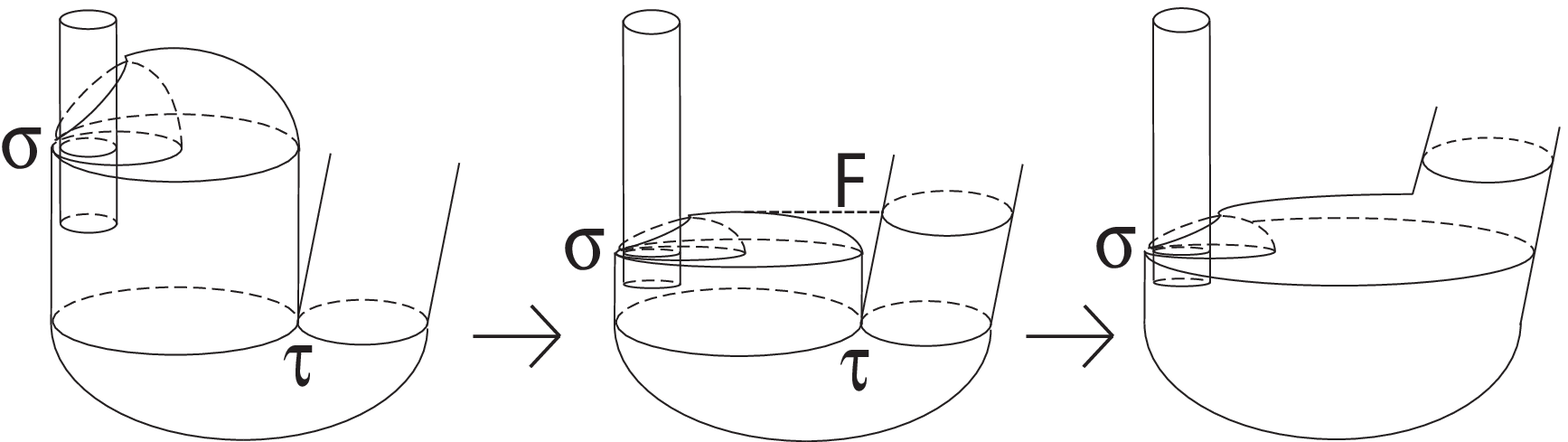}. As in the proof of Lemma \ref{remove}, we see that each point of $K \cap D^{\sigma}_{2}$ together with its image under the isotopy cobound monotone sub arcs of $K$. Similarly, each curve $S \cap D^{\sigma}_{2}$ together with its image under the isotopy cobounds monotone annulus in $S$. Thus, we have produced an ambient isotopy of $S$ and $K$ which reduces the number of saddles of $S$ but preserves both $\Sigma$ as an $h$-level bridge sphere and $(K,S)$ in bridge position. Hence, $S$ is not taut with respect to $\Sigma$.

\end{proof}

\begin{figure}[h]
\centering \scalebox{.7}{\includegraphics{nesting.eps}}
\caption{}\label{fig:nesting.eps}
\end{figure}

The previous lemmas in this section are independent of bridge sphere distance. Below we define the distance of a bridge sphere and obtain additional constraints on taut punctured spheres.

Let $\Sigma$ be a $2n$-punctured bridge sphere separating $(S^3,K)$ into two trivial $n$-strand tangles $(H_1,T_1)$ and $(H_2,T_2)$. Let
$\mathcal{C}_{n}$ be the curve complex for $\Sigma$. Let $\mathcal{V}_1$ be the set of all
isotopy classes of essential simple closed curves in $\partial(H_1)-T_1$ that bound disks in $H_1-T_1$. Define
$\mathcal{V}_2$ analogously. The \textbf{distance} of $\Sigma$, denoted $d(\Sigma)$, is the distance between $\mathcal{V}_1$ and $\mathcal{V}_2$ in $\mathcal{C}_{n}$ where the metric structure of $\mathcal{C}_{n}$ arises from assigning a length of one to each edge.

\begin{lemma}\label{prime}
Suppose $K$ is a link with a bridge sphere $\Sigma$ of distance 3 or greater, then $K$ is not split and $K$ is prime.

\end{lemma}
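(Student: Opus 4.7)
The plan is to argue each assertion by contradiction, in each case producing an essential punctured sphere in $S^3 - K$ and showing that its interaction with $\Sigma$ forces $d(\Sigma) < 3$.

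For the splitting statement, suppose $K$ admits a splitting sphere $S$, which is essential in $S^3 - K$ and disjoint from $K$. Apply Lemma \ref{pairbridgeiso} to put $(K, S)$ in bridge position with respect to $\Sigma$ and isotope $S$ to be taut. Since $S \cap K = \emptyset$, every outermost monotone disk $D_\sigma \subset S$ is automatically disjoint from $K$, so every outermost saddle is inessential; Lemma \ref{iness} therefore forbids any saddles in $\digamma_S$ altogether. Thus $S$ has a unique maximum and a unique minimum, forcing $|S \cap \Sigma| \le 1$. If $S \cap \Sigma = \emptyset$, then $S$ lies entirely in one tangle $(H_i, T_i)$; because the tangle is an untangle, none of its arcs has both endpoints off $\Sigma$, so no component of $K$ is contained in $H_i$, and the ball $S$ bounds in $H_i$ is disjoint from $K$, contradicting essentiality. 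If $S \cap \Sigma$ is a single curve $c$, a standard innermost-disk argument on $\Sigma$ lets us assume $c$ is essential in $\Sigma - K$; then the two disks into which $c$ divides $S$ witness $c \in \mathcal{V}_1 \cap \mathcal{V}_2$, giving $d(\Sigma) = 0$, again a contradiction.

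For the primeness statement, assume $K = K_1 \# K_2$ nontrivially and let $S$ be a decomposing sphere, so $|S \cap K| = 2$ and $S$ is essential. Put $(K, S)$ in bridge position and make $S$ taut. Now outermost disks may carry punctures, so Lemma \ref{iness} no longer kills all saddles outright; however, combined with Lemmas \ref{remove} and \ref{nesting} and the constraint $|S \cap K| = 2$, every outermost disk must contain at least one of the two punctures, severely restricting $\digamma_S$. The curves of $S \cap \Sigma$ are pairwise disjoint in $\Sigma - K$, hence any two essential ones have $\mathcal{C}_n$-distance at most one. Innermost-disk analysis of $S$ on each side of $\Sigma$ then produces an essential curve in $\mathcal{V}_1$ and an essential curve in $\mathcal{V}_2$ coming from boundary curves of the two outermost disks; connecting them through at most one intermediate curve of $S \cap \Sigma$ yields a $\mathcal{C}_n$-path of length at most two from $\mathcal{V}_1$ to $\mathcal{V}_2$, contradicting $d(\Sigma) \ge 3$.

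The main obstacle is the primeness case, specifically ensuring the curves extracted from $S \cap \Sigma$ are essential in $\Sigma - K$ so they represent legitimate vertices of $\mathcal{C}_n$. The hypothesis $|S \cap K| = 2$ is crucial: it forces every outermost disk in the foliation to carry a puncture, preventing trivial curves from proliferating, while the nesting restriction of Lemma \ref{nesting} together with the $K$-disjoint monotone annuli between adjacent saddles pin down which curves on $\Sigma$ bound disks in $H_1 - T_1$ versus $H_2 - T_2$. Once essentiality and the correct side-assignments are established, the short curve-complex path falls out directly from disjointness of the intersection curves on $\Sigma$.
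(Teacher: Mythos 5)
The paper does not argue this lemma from scratch: its entire proof is a citation of Theorem 5.1 of \cite{BSCH}, which bounds $d(\Sigma)$ by $2-\chi$ of any essential surface in the complement (giving $d\le 0$ for a splitting sphere and $d\le 2$ for a decomposing sphere). You are therefore attempting something genuinely harder than what the paper records, namely a self-contained proof via the taut/saddle machinery. Your splitting-sphere half essentially works: tautness plus Lemma \ref{iness} kills all saddles, $|S\cap\Sigma|\le 1$, and both resulting cases lead to contradictions (though with a single intersection curve there is no ``innermost-disk argument''; you should instead observe that an inessential $c$ cannot bound a once-punctured disk in $\Sigma$ by parity, and if it bounds an unpunctured disk you can push $S$ off $\Sigma$ entirely, reducing to the disjoint case).

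The primeness half has a genuine gap, and the structural picture you describe is not the one tautness actually produces. A Morse foliation of a sphere with $s\ge 1$ saddles has exactly $s+2$ pairwise disjoint outermost disks, and Lemma \ref{iness} forces each to contain a puncture; with only $2$ punctures this gives $s+2\le 2$, a contradiction, so in fact $s=0$. There are no ``two outermost disks'' and no ``intermediate curves of $S\cap\Sigma$'': the decomposing sphere meets $\Sigma$ in a single circle $c$ splitting $S$ into punctured disks $D_1\subset H_1$ and $D_2\subset H_2$ with $|D_1\cap K|+|D_2\cap K|=2$. The decisive step --- producing curves in $\mathcal{V}_1$ and in $\mathcal{V}_2$ within distance $2$ of each other --- is asserted but not carried out, and it does not follow from ``boundary curves of outermost disks'': a once- or twice-punctured disk is not a compressing disk for $\Sigma-K$, so its boundary does not lie in $\mathcal{V}_i$. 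One must boundary-compress the $D_i$ inside the untangles $(H_i,T_i)$ to obtain honest compressing disks disjoint from $c$, and then verify their boundaries are essential in $\Sigma-K$ (ruling out the $0$- and $1$-punctured cases, exactly as in the claim inside the paper's proof of Lemma \ref{triple}). That missing step is precisely the content of the Bachman--Schleimer theorem the paper invokes; without it, your primeness argument does not close.
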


\begin{proof}
This follows immediately from Theorem 5.1 of \cite{BSCH}.

\end{proof}

\begin{lemma}\label{triple}
Suppose there exists a bridge sphere $\Sigma$ for $K$ of distance three or greater. If $S$ is c-incompressible punctured sphere taut with respect to an $h$-level embedding of $\Sigma$, then there do not exist standard saddles $\sigma$, $\rho$, and $\tau$ in $\digamma_S$ such that $\sigma$ is adjacent to $\rho$ and $\rho$ is adjacent to $\tau$.
\end{lemma}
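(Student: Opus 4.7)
My plan is to assume such a chain of standard saddles $\sigma$, $\rho$, $\tau$ exists and derive a contradiction with $d(\Sigma)\geq 3$. First, I apply Lemma \ref{nesting} to each adjacent pair: in $\{\sigma,\rho\}$ and in $\{\rho,\tau\}$, each pair consists of two standard saddles and $S$ is taut, so the conclusion of Lemma \ref{nesting} must fail; hence the pair is nested with respect to the same 3-ball. Because $\rho$'s nesting is intrinsic to $\rho$, all three saddles must be nested with respect to a common 3-ball, which I relabel as $B_1$.

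Next, I would use the chain to assemble an embedded 2-sphere $\Pi\subset\overline{B_1}$ disjoint from $K$. The piece of $\Pi$ lying in $S$ is the planar subsurface $\mathcal{E} := E_\sigma \cup A_{\sigma\rho} \cup E_\rho \cup A_{\rho\tau} \cup E_\tau$, which is disjoint from $K$ by standardness of the three saddles and by the definition of adjacency. The complementary pieces of $\Pi$ are bicollar annular strips around each saddle together with level disks bounded by the bicollar circles $c_i^\sigma,c_i^\rho,c_i^\tau$ in appropriate level spheres on the $B_1$-side. The common $B_1$-nesting from the first step is precisely what forces these auxiliary pieces to sit on the same side of $S$ and glue coherently with $\mathcal{E}$ into an embedded sphere. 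To choose the level disks disjoint from $K$ I would invoke $c$-incompressibility of $S$: any essential $K$-obstruction in a candidate level disk can be traded in for a compressing or cut disk for $S$, contradicting $c$-incompressibility.

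Third, since $K$ is non-split by Lemma \ref{prime} and $\Pi$ is disjoint from $K$, the sphere $\Pi$ bounds a 3-ball $\mathcal{B}\subset S^3\setminus K$, which sits in $\overline{B_1}$ by construction. After a standard innermost-disk argument to remove inessential loops of $\Pi\cap\Sigma$, any surviving loop $\gamma$ bounds a disk in $\mathcal{B}$ on each side of $\Sigma$; since $\mathcal{B}\cap K=\emptyset$, these are compressing disks for $\Sigma$ in $(H_1,T_1)$ and $(H_2,T_2)$ sharing the common boundary $\gamma$. Hence $\gamma\in \mathcal{V}_1\cap \mathcal{V}_2$, forcing $d(\Sigma)=0$, which contradicts $d(\Sigma)\geq 3$. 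In the degenerate case that $\Pi\cap\Sigma$ becomes empty after elimination, $\mathcal{B}$ lies entirely on one side of $\Sigma$; there I would argue the chain still provides enough freedom to execute a saddle-count-reducing isotopy preserving $\Sigma$ as an $h$-level sphere, contradicting tautness.

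The principal obstacle is the assembly of $\Pi$ in the second step---ensuring the level disks can be chosen disjoint from $K$, which is exactly where $c$-incompressibility is crucial. A secondary obstacle is handling the degenerate case in the third step when the constructed sphere does not cross $\Sigma$, which requires an alternative tautness argument.
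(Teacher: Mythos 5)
Your first step (using Lemma \ref{nesting} on each adjacent pair to force a common nesting for $\sigma$, $\rho$, $\tau$) is correct and matches the paper, but the argument breaks down at the second step. The subsurface $E_\sigma\cup A_{\sigma\rho}\cup E_\rho\cup A_{\rho\tau}\cup E_\tau$ has two leftover boundary circles, namely the lobes of $\sigma$ and $\tau$ not involved in the adjacencies (say $s_2^{\sigma}$ and $s_2^{\tau}$), and to close it into a sphere you must cap these with the level disks $D_2^{\sigma}$ and $D_2^{\tau}$. Those level disks cannot in general be chosen disjoint from $K$, and $c$-incompressibility does not remove their punctures: what $c$-incompressibility (together with primeness, non-splitness, and tautness via Lemmas \ref{iness} and \ref{remove}) actually yields is the \emph{opposite} conclusion, proved as the Claim in the paper's proof, that each such level disk must meet $K$ in at least two points --- otherwise $\sigma$ becomes inessential or removable and $S$ is not taut. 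So the sphere $\Pi$ disjoint from $K$ simply does not exist in the taut case. Even granting $\Pi$, the third step fails: a $2$-sphere disjoint from a non-split link bounds a ball missing $K$ and can be isotoped off $\Sigma$ entirely, so the loops of $\Pi\cap\Sigma$ need not be essential in the punctured sphere $\Sigma$; without essentiality they are not vertices of the curve complex and give no bound on $d(\Sigma)$. (Concluding $d(\Sigma)=0$ is also far stronger than what is true or needed.)

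The paper's route is different and is worth internalizing. It fixes a level copy of $\Sigma$ just above $h(\sigma)$, uses the Claim to show that $c_1^{\sigma}$, $c_2^{\sigma}$, and the curve $c_\tau$ cut out of $\Sigma$ by the $\rho$--$\tau$ annulus are all essential in the punctured sphere $\Sigma$, and observes that the chain of three saddles supplies two annuli disjoint from $K$, one properly embedded above $\Sigma$ and one below, each having a boundary component disjoint from $c_2^{\sigma}$. Boundary-compressing these annuli inside the two trivial tangles produces essential disk boundaries, one in $\mathcal{V}_1$ and one in $\mathcal{V}_2$, each disjoint from the essential curve $c_2^{\sigma}$, whence $d(\Sigma)\leq 2$, the desired contradiction. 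The role of the third saddle is precisely to hand you compressing data on \emph{both} sides of $\Sigma$; your construction collapses everything into a single sphere on one side of $S$ and loses exactly that structure.
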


\begin{proof}
By Lemma \ref{prime}, we know $K$ is non-split and prime. We will proceed by proving the contrapositive of the above statement. Suppose that three such saddles $\sigma$, $\rho$ and $\tau$ do exist. If $\digamma_S$ contains inessential or removable saddles, then $S$ is not taut by Lemma \ref{iness} or Lemma \ref{remove}. If $\sigma$ and $\rho$ are nested with respect to different three balls, then $S$ is not taut by Lemma \ref{nesting}. Similarly, if $\rho$ and $\tau$ are nested with respect to different three balls then $S$ is not taut by Lemma \ref{nesting}. Hence, we can assume that all three saddles $\sigma$, $\rho$ and $\tau$ have a common nesting and $\digamma_S$ contains no inessential and no removable saddles.

Assume that $E_{\rho}$ has a unique maximum and $h(\sigma)>h(\tau)$. If $E_{\rho}$ has a unique minimum or $h(\sigma)<h(\tau)$ the proof follows similarly.  By Lemma \ref{pairbridgeiso}, we can assume there is a partitioning of the critical values of $h|_K$ and $h|_S$ as in Definition \ref{pairbridge}. By standard Morse theory arguments, we can assume that the collection $\mathcal{C}$ of all critical points of $h|_K$, all critical points of  $h|_S$, and all points of $S\cap K$ occur at distinct heights. Choose $c$ so that $c$ is strictly between $h(\sigma)$ and the height of the next higher element of $\mathcal{C}$. Additionally, $h^{-1}(c)$ is a bridge sphere for $K$ that is isotopic to $\Sigma$. Although an abuse of notation, we will refer to $h^{-1}(c)$ as $\Sigma$. Recall the definition of $c^{\sigma}_1$ and $c^{\sigma}_2$ from Section 3 and Figure \ref{fig:labels1.eps}. We can assume that $c^{\sigma}_1$ and $c^{\sigma}_2$ lie on $\Sigma$.

Let $A_{\tau}$ be the monotone annulus with boundary $s^{\rho}_2 \cup s^{\tau}_1$. Since $\sigma$ is assumed to be higher than $\tau$ then $A_{\tau}$ intersects $B_K$ in a single simple closed curve $c_{\tau}$. In particular, $c_{\tau}$ is isotopic to both $s^{\rho}_2$ and $s^{\tau}_1$.  See Figure \ref{fig: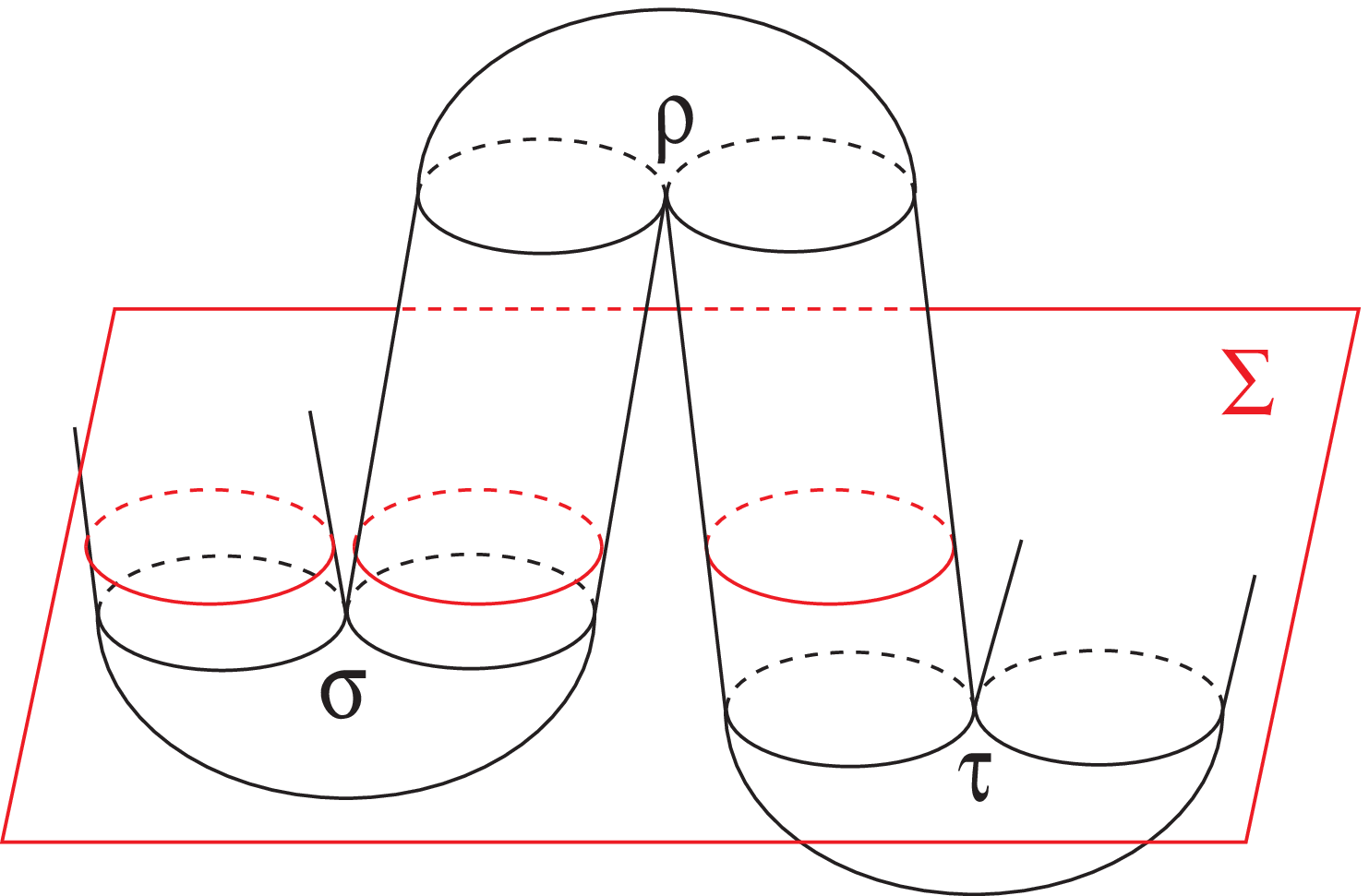}.

\begin{figure}[h]
\centering \scalebox{.5}{\includegraphics{triple.eps}}
\caption{}\label{fig:triple.eps}
\end{figure}

Claim: If any of $c^{\sigma}_1$, $c^{\sigma}_2$ or $c_{\tau}$ bounds a zero or once punctured disk in $\Sigma$, then $S$ is not taut.

Proof of claim: Since all three saddles $\sigma$, $\rho$ and $\tau$ have a common nesting, each of $c^{\sigma}_1$, $c^{\sigma}_2$ and $c_{\tau}$ bound pairwise disjoint disks $E_1$, $E_2$ and $E_3$ respectively in $\Sigma$. To prove the claim it suffices to show that each of the disks $E_1$, $E_2$ and $E_3$ meet $K$ in at least two points.

Suppose, to form a contradiction, that $E_1$ is disjoint from $K$. $S \cap E_1$ is a collection of disjoint simple closed curves. An
innermost such curve $\gamma$ bounds a disk $D$ in $\Sigma$ that is disjoint from $K$ and meets $S$ only in its boundary. By c-incompressibility of $S$, $\gamma$ also bounds a disk $D'$ in $S$
that is disjoint from $K$. Since $K$ is non-split, $D$ and $D'$
cobound a 3-ball disjoint from $K$. Hence, we can eliminate $\gamma$ as an curve of intersection by isotopying $D'$ across this 3-ball and just past
$D$.  This isotopy leaves $K$ fixed and can only decrease the
number of saddles in $\digamma_{C}$. By repeating this process, we
can eliminate all curves of intersection of $S$ with
$int(E_1)$. Again, by c-incompressibility of $S$, $\partial(E_1)$
bounds a disk $D'$ in $C$ which is disjoint from $K$. Since $K$ is non-split $D'$ and $E_1$ cobound a 3-ball. Isotope $D'$ across this 3-ball to $E_1$ while fixing $K$. If $D'$ contains $\sigma$, then this isotopy eliminates $\sigma$, $\rho$ and $\tau$ while creating no new saddles. In this case $S$ was not taut. If $D'$ does not contain $\sigma$ then, after the isotopy, $\sigma$ is an inessential saddle. Hence, $S$ is not taut, by Lemma \ref{iness}.

Suppose, to from a contradiction, that $E_1$ meets $K$ in exactly one point. $S \cap E_1$ is a collection of disjoint simple closed curves. An
innermost such curve $\gamma$ bounds a disk $D$ in $\Sigma$ that  meets $S$ only in its boundary. If $D$ is disjoint from $K$ then apply the argument in the preceding paragraph to eliminate $\gamma$. Hence we can assume $D$ meets $K$ exactly once.  By c-incompressibility of $S$, $\gamma$ also bounds a disk $D'$ in $S$
that meets $K$ exactly once. Since $K$ is prime, $D$ and $D'$
cobound a 3-ball containing an unknotted arc of $K$. Hence, we can eliminate $\gamma$ as an curve of intersection isotopying $D'_K$ across this 3-ball and just past
$D_K$.  This isotopy leaves $K$ fixed and can only decrease the
number of saddles in $\digamma_{C}$. By repeating this process, we
can eliminate all curves of intersection of $S$ with
$int(E_1)$. Again, by c-incompressibility of $S$, $\partial(E_1)$
bounds a disk $D'$ in $C$ which meets $K$ exactly once and $E_1 \cup D$ is the boundary of a three ball containing an unknotted arc of $K$. Isotope $D'$ across this 3-ball to $E_1$ while fixing $K$. If $D'$ contains $\sigma$, then this isotopy eliminates $\sigma$, $\rho$ and $\tau$ while creating no new saddles. In this case $S$ was not taut. If $D'$ does not contain $\sigma$ then, after the isotopy, $\sigma$ is a removable saddle. Hence, $S$ is not taut, by Lemma \ref{remove}.

By applying nearly identical arguments, we can show that both $E_2$ and $E_3$ meets $K$ in at least two points. The claim then follows.$\square$

Let $M$ be the three ball above $\Sigma$ and $N$ be the three ball below $\Sigma$. By construction $c^{\sigma}_1$ and $c^{\sigma}_2$ cobound an annulus $A_{\sigma}$ properly embedded in $N$ and disjoint from $K$. Similarly, $c^{\sigma}_2$ and $c_{\tau}$ cobound and annulus $A_{\rho}$ properly embedded in $M$ and disjoint from $K$. Since $(M,K\cap M)$ and $(N,K \cap N)$ are both untangles then $A_{\sigma}$ and $A_{\rho}$ are boundary compressible in $M-K$ and $N-K$. Let $H_{\sigma}$ be the disk in $M$ gotten by boundary compressing $A_{\sigma}$ and $H_{\rho}$ be the disk in $M$ gotten by boundary compressing $A_{\rho}$. Both $\partial(H_{\sigma})$ and $\partial(H_{\rho})$ are disjoint from $c^\sigma_2$. By the above claim, $\partial(H_{\rho})$ and $\partial(H_{\sigma})$ are essential in $\Sigma$. Since $\partial(H_{\sigma}) \epsilon \mathcal{A}$, $\partial(H_{\rho}) \epsilon \mathcal{B}$, and both $\partial(H_{\sigma})$ and $\partial(H_{\rho})$ are disjoint from $c^\sigma_2$, $d(B)=d(\mathcal{A}, \mathcal{B})<3$.
\end{proof}

We summarize the results of the previous lemmas using the following definition.

\begin{defin}\label{admissdef}
A singular foliation $\digamma_S$ for a closed surface $S$ with $k$-marked points is \textbf{admissible} if it is induced by the standard height function on $S^3$ via some Morse embedding of $S$ into $S^3$ such that the following hold:

1)there do not exist standard saddles $\sigma$, $\rho$, and $\tau$ in $\digamma_S$ such that $\sigma$ is adjacent to $\rho$ and $\rho$ is adjacent to $\tau$.

2)every outermost disk of $\digamma_{S}$ contains at least one marked point.
\end{defin}

\textbf{Proof of Theorem \ref{thm:struc}}

\begin{proof}
By Lemma \ref{iness} and Lemma \ref{triple}, $\digamma_S$ is admissible when we view $S$ as a surface with marked points $S\cap K$.
\end{proof}

For a fixed surface type and number of puncture points, the number of saddles in any admissible singular foliation is bounded.

\begin{lemma}\label{numberofsaddles}
If $S$ is topologically a 2-sphere with $k$-marked points and $\digamma_S$ is admissible, then the number of saddles in $\digamma_S$ is at most $5k-8$.
\end{lemma}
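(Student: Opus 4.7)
The plan is to combine both parts of admissibility with a discharging argument on the combinatorics of $\digamma_S$. Each extremum of $h|_S$ lies in a unique maximal monotone subdisk of $\digamma_S$, and its boundary is either a single saddle loop $s_i^\sigma$ (so the disk is an outermost disk $D_\sigma$) or the full figure-8 of some saddle $\sigma$ (so the disk is the pants-side region of $\sigma$; write it $R_3(\sigma)$). Since $S$ is a 2-sphere the Morse equality gives that the number of extrema equals $s+2$; let $L_\ell$ count extrema in outermost disks and $L_p$ count extrema in pants-side regions, so $L_\ell+L_p=s+2$.

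Two distinct outermost disks are pairwise disjoint because a monotone disk cannot contain any saddle, and by the same nesting argument every $R_3(\sigma)$ is disjoint from every outermost disk and from every other $R_3(\tau)$. Part (2) of Definition \ref{admissdef} forces each outermost disk to meet $K$. Call a pants-side extremum \emph{non-standard} if its $R_3$ meets $K$ and \emph{standard} otherwise (this matches the definition of a standard saddle). Writing $L_p^{ns}$ and $L_p^s$ for the two counts, one obtains distinct marked points in each of the $L_\ell+L_p^{ns}$ pairwise disjoint regions, so $L_\ell+L_p^{ns}\le k$; combined with $L_\ell+L_p^{ns}+L_p^s=s+2$ this gives $L_p^s=s+2-L_\ell-L_p^{ns}$.

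For the discharging step, use part (1) of Definition \ref{admissdef} to bound $L_p^s$ from above. Each standard saddle $\sigma$ has two loop-side edges in the adjacency structure of $\digamma_S$; part (1) prevents both of them from being adjacencies (loop-loop edges with $K$-disjoint annuli) to standard saddles, so at least one of them is a \emph{good slot}. Each good slot charges one unit to either (a) a marked point in $\sigma$'s outermost disk (when the slot is itself a loop-leaf), (b) a marked point of a $K$-intersecting loop-loop annulus (when the slot is a loop-loop edge whose monotone annulus meets $K$), or (c) a non-standard saddle at the far end of the slot (when the slot is a loop-loop $K$-disjoint edge to a non-standard saddle or a loop-pants edge). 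Distinct outermost disks are disjoint, so (a) yields at most $L_\ell$ charges; each $K$-intersecting loop-loop annulus absorbs at most two charges (one per endpoint saddle), and since such annuli are disjoint from every outermost disk and every non-standard $R_3$ they number at most $k-L_\ell-L_p^{ns}$, so (b) yields at most $2(k-L_\ell-L_p^{ns})$ charges; each non-standard saddle has three incident edges each absorbing at most one charge, so (c) yields at most $3(s-L_p^s)$ charges. Since every standard saddle contributes at least one good slot,
\[
L_p^s \;\le\; L_\ell + 2(k-L_\ell-L_p^{ns}) + 3(s-L_p^s).
\]
Substituting $L_p^s=s+2-L_\ell-L_p^{ns}$ and rearranging gives $s\le 2k-8+3L_\ell+2L_p^{ns}$; the bounds $L_\ell+L_p^{ns}\le k$ and $L_\ell\le k$ together imply $3L_\ell+2L_p^{ns}\le 3k$, yielding $s\le 5k-8$.

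The main technical obstacle is the discharging: verifying that each $K$-intersecting loop-loop annulus really absorbs at most two charges (one per endpoint saddle) and each non-standard saddle really absorbs at most three (one per incident edge). The factor of $2$ on the annulus charges is exactly what gets offset by $L_\ell+L_p^{ns}\le k$ in the final rearrangement to produce the sharp constant $5k-8$ rather than the weaker linear bound that a more naive charging would give.
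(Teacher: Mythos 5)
Your argument is correct, and it takes a genuinely different route from the paper's. The paper argues by an extremal principle plus induction on $k$: it takes an admissible foliation with the maximal number of saddles, shows that every marked point then lies in an outermost disk and every outermost disk carries exactly one marked point, locates an outermost non-standard saddle $\sigma$, and excises $\sigma$ together with the two $2$-saddle chains hanging off $c_1^{\sigma}$ and $c_2^{\sigma}$ (five saddles, one marked point), reducing to $k-1$. Your discharging argument instead works directly with an arbitrary admissible foliation, using only the Euler-characteristic count $L_\ell+L_p=s+2$, the pairwise disjointness of the marked-point-carrying regions, and the capacity bounds $1$, $2$, $3$ on outermost disks, $K$-meeting loop-loop annuli, and non-standard saddles. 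This buys you something: you never have to check that the saddle-adding modifications in the paper's two preliminary claims preserve admissibility and realizability by an embedding, and you get the sharper intermediate inequality $s\le 2k-8+3L_\ell+2L_p^{ns}$ for free. The points you flag as the technical obstacle do check out: each loop-side region of a saddle is a single component of the complement of the singular leaves, so each boundary circle of a $K$-meeting loop-loop annulus can be the chosen good slot of at most one standard saddle (capacity $2$), and each of the three sides of a non-standard saddle abuts exactly one region, through which at most one charge can arrive (capacity $3$); the paper's extremal configuration saturates your inequality with $L_\ell=k$, $L_p^{ns}=0$. Two small things you should make explicit: (i) condition (1) of admissibility involves three saddles, so to conclude that a standard saddle $\rho$ has at most one bad slot you need that the saddles at the far ends of its two loop-side annuli are distinct from each other and from $\rho$ --- this holds because the two loop-side disks of $\rho$ in the sphere are disjoint and the two loops of $\rho$ share the saddle point, so they cannot cobound an annulus; (ii) the identity $L_\ell+L_p=s+2$ requires $s\ge 1$ and the final bound requires $k\ge 2$, both harmless since for $s=0$ the claim is immediate when $k\ge 2$, and $k\ge 2$ is implicit in the paper as well (its induction starts there, and the statement is vacuously false for $k\le 1$).
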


\begin{proof}Let $\digamma_S$ be an admissible singular foliation for $S$ that has a maximal the number of saddles.

Claim: All marked points are contained in the collection of outermost disks of $\digamma_S$.

Proof of claim: Suppose not. Hence, there is a puncture point $x$ not on an outermost disk of $\digamma_S$. There is a non-singular leaf $\alpha$ in $\digamma_S$ containing $x$. Isotope $S$ in a neighborhood of $\alpha$ as in Figure \ref{fig: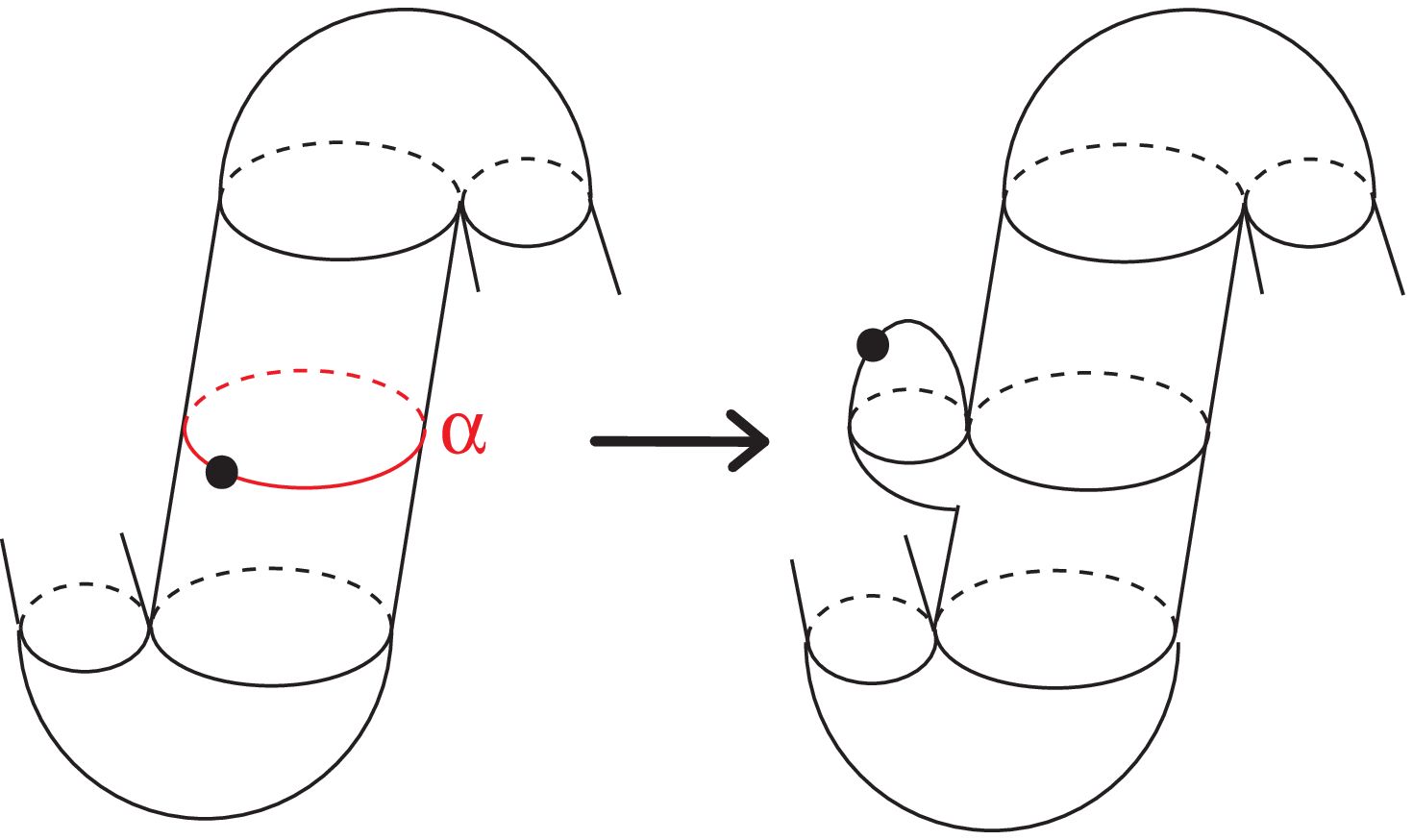}. The resulting singular foliation remains admissible but has one more saddle then $\digamma_S$, contradicting the maximality of the number of saddles of $\digamma_S$. $\square$

\begin{figure}[h]
\centering \scalebox{.4}{\includegraphics{admiss1.eps}}
\caption{}\label{fig:admiss1.eps}
\end{figure}

Claim: All outermost saddles of $\digamma_S$ contain exactly one puncture point.

Proof of claim: Suppose not. Hence, there is an outermost disk $D_{\sigma}$ that contains two puncture points $x$ and $y$. Let $\alpha_x$ and $\alpha_y$ be the closed curves in $\digamma_S$ containing $x$ and $y$ respectively. Up to relabeling we can assume that $\alpha_y$ bounds a monotone disk in $\digamma_S$ that contains $x$. Alter $\digamma_S$ in a neighborhood of $\alpha_y$ as in Figure \ref{fig: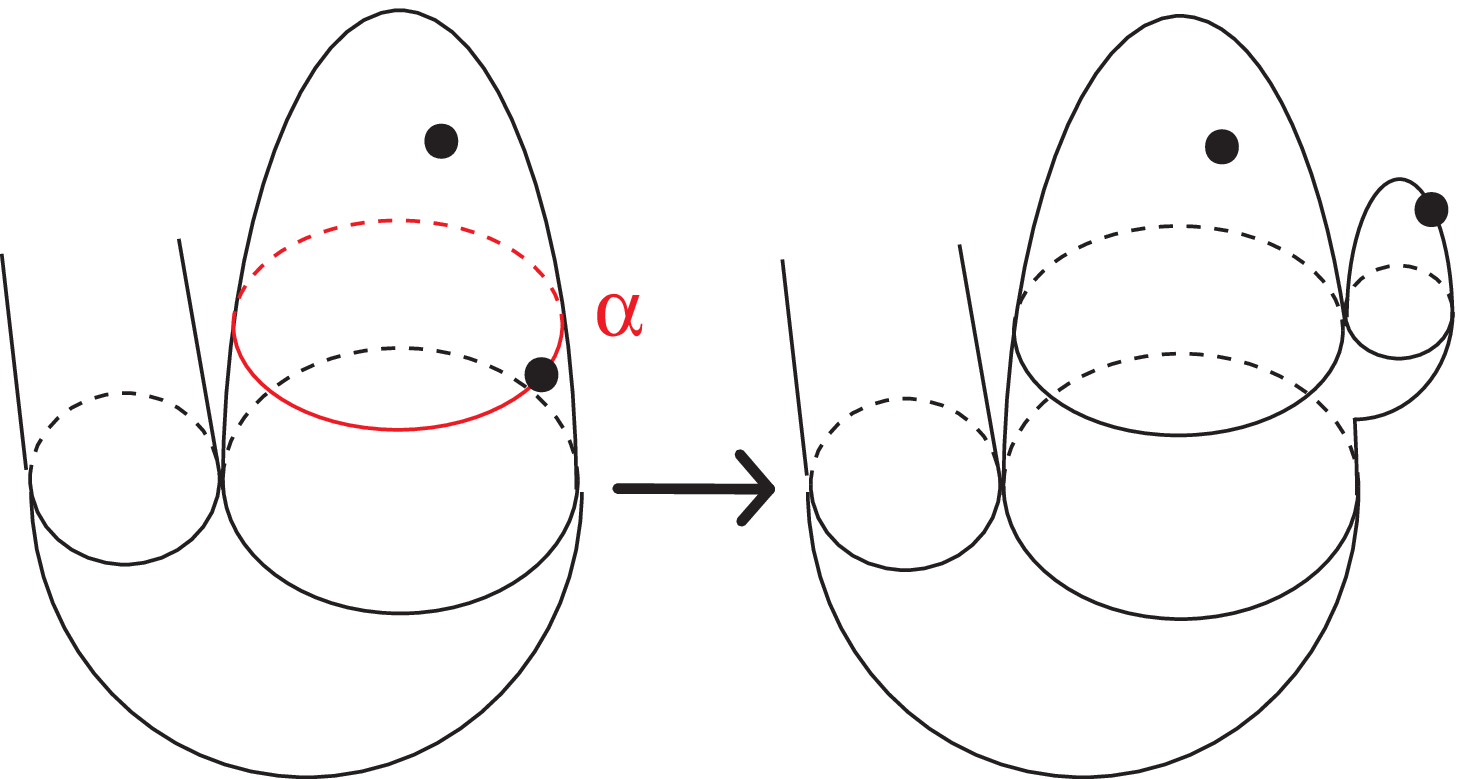}. The resulting singular foliation remains admissible but has one more saddle then $\digamma_S$, contradicting the maximality of the number of saddles of $\digamma_S$. $\square$

\begin{figure}[h]
\centering \scalebox{.4}{\includegraphics{admiss2.eps}}
\caption{}\label{fig:admiss2.eps}
\end{figure}

We proceed by induction on $k\geq 2$.

Suppose $k=2$, then the maximal number of saddles is two, as depicted in Figure \ref{fig: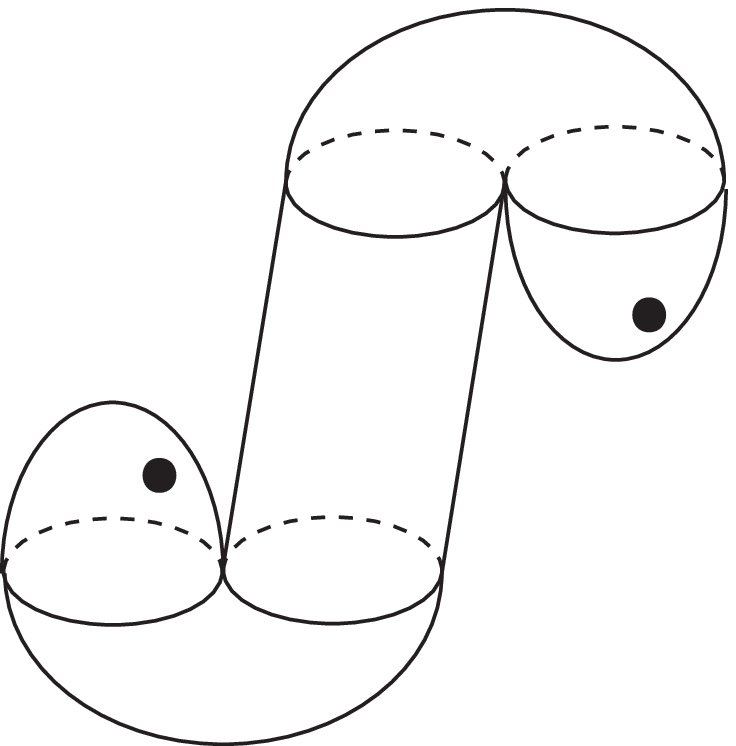}.

\begin{figure}[h]
\centering \scalebox{.4}{\includegraphics{admiss3.eps}}
\caption{}\label{fig:admiss3.eps}
\end{figure}

Assume $\digamma_{S}$ has at most $5(l-1)-8$ saddles for $k=l-1$. We will show $\digamma_{S}$ has at most $5l-8$ saddles for $k=l$.

Choose $\digamma_S$ to have the maximal number of saddles of any admissible singular foliation. Since all puncture points are contained in outermost disks of $\digamma_S$ and all outermost disks of $\digamma_S$ contain exactly one puncture point, then $\digamma_{S}$ contains exactly $l-2$ non-standard saddles. Since we can assume $l>2$ there exists a non-standard saddle for $\digamma_{S}$. Label this saddle $\sigma$. By passing to an outermost non-standard saddle and possibly relabeling $c^{\sigma}_1$, $c^{\sigma}_2$ and $c^{\sigma}_3$ we can assume that both $c^{\sigma}_1$ and $c^{\sigma}_2$ bound disks $F^{\sigma}_1$ and $F^{\sigma}_2$ respectively in $\digamma_S$ such that both $F^{\sigma}_1$ and $F^{\sigma}_2$ are disjoint from all non-standard saddles. Each of $F^{\sigma}_1$ and $F^{\sigma}_2$ meets a unique outermost disk of $\digamma_S$. By condition $2)$ of the definition of admissible and the previous claim, each of these disks contains a unique marked point. By condition $1)$ of the definition of admissible, each of $F^{\sigma}_1$ and $F^{\sigma}_2$ contains exactly two saddles, all of which are standard. Replace a neighborhood of the union of $F^{\sigma}_1$ and $F^{\sigma}_2$ in $\digamma_S$ with a single monotone disk $M$ containing a unique marked point. Call the resulting singular foliation $\digamma^*_S$ and notice that $\digamma^*_S$ is a foliation for a sphere with $l-1$ marked points. Notice that $\digamma^*_S$ has 5 fewer saddles than $\digamma_S$. If we can show that $\digamma^*_S$ is admissible, then, by the induction hypothesis, $\digamma_S$ has at most $5(l-1)-8+5=5k-8$ saddles and we have proven the theorem.

Beginning with an embedding of a sphere with $l$ marked points in $S^3$ realizing $F_S$, use the isotopy from Lemma \ref{iness} to eliminate the outermost saddles in $F^{\sigma}_1$ and $F^{\sigma}_2$ and iterate this process until $\sigma$ is outermost and can be eliminated similarly. Since the isotopy in Lemma \ref{iness} preserves the singular foliation induced by the height function outside a neighborhood of $F^{\sigma}_1 \cup F^{\sigma}_2$, then this process produces an embedding of a sphere with $l-1$ marked points into $S^3$ with induced singular foliation $\digamma^*_S$.

If $\digamma^*_S$ fails Definition \ref{admissdef} criteria $1)$, then, by inclusion of the complement of $M$ in $\digamma^*_S$ into $\digamma_S$, $\digamma_S$ also fails criteria $1)$, a contradiction.

Let $D_{\tau}$ be an outermost disk of $\digamma^*_S$. If $D_{\tau}$ is again an outermost disk in $\digamma_{S}$ via inclusion, then $D_{\tau}$ contains one marked point by the admissibility of $\digamma_S$. If $D_{\tau}$ is not an outermost disk for $\digamma_{S}$, then $D_{\tau}$ contains $M$ and, thus, contains a marked point.

Hence, $\digamma_{S}^*$ is admissible.
\end{proof}

\section{Tangle Products}\label{sec:tangleProd}

In this section we define tangle product and use the combinatorial result of the previous section to show that, under suitable hypothesis, the bridge number of a tangle product is supper-additive up to constant error.

\begin{defin} A graph $G$ is an $n$-star graph if $G$ has $n$ edges and $n+1$ vertices such that $n$ of the vertices are valence one and one of the vertices is valence $n$. Denote by $\partial(G)$ the set of valence one vertices.
\end{defin}

\begin{defin}
Let $K_1$ and $K_2$ be links embedded in distinct copies of $S^3$, $S^3_1$ and $S^3_2$. Let $G_1$ and $G_2$ be $n$-star graphs embedded in $S^3_1$ and $S^3_2$ respectively such that $G_i\cap K_i=\partial(G_i)$. Let $\mu(G_i)$ be a small, closed, regular neighborhood of $G_i$ in $S^3_i$ such that $(\mu(G_i), K_i \cap \mu(G_i))$ is a rational tangle. Let $B_{i}=S^3_i-int(\mu(G_i))$. A link in $S^3$ obtained by gluing $\partial(B_1)$ to $\partial(B_2)$ via a homeomorphism such that points in $\partial(B_1)\cap K_1$ are mapped to points in $\partial(B_2)\cap K_2$ is called an \textbf{$n$-strand tangle product} of $K_1$ and $K_2$ and is denoted by $K_1\ast_{S}K_2$. The image of $\partial(B_1)$ and $\partial(B_2)$ under this identification is called the product sphere and is denoted $S$.
\end{defin}

\textbf{Proof of Theorem \ref{main}}

\begin{proof}
Let $\Sigma$ be a minimal bridge sphere for $K_1\ast_{S}K_2$ of distance at least three. We can assume that $\Sigma$ is $h$-level and that $S$ is taut with respect to $\Sigma$. By Theorem \ref{thm:struc}, $\digamma_S$ is admissible. By Lemma \ref{numberofsaddles}, $\digamma_S$ contains at most $10n-8$ saddles.

If the set of saddles $\digamma_S$ is nonempty, then it contains at least two outermost disks. Hence, there exists an outermost saddle $\sigma$ such that $D_{\sigma}$ meets $K_1\ast_{S}K_2$ in at most $n$ points. We can eliminate $\sigma$ via the ambient isotopy that horizontally shrinks and vertically lowers $B_{\sigma}$ as in the proof of Lemma \ref{iness}. This isotopy produces at most $n$ new maxima for $h_{K_1\ast_{S}K_2}$. See Figure \ref{fig: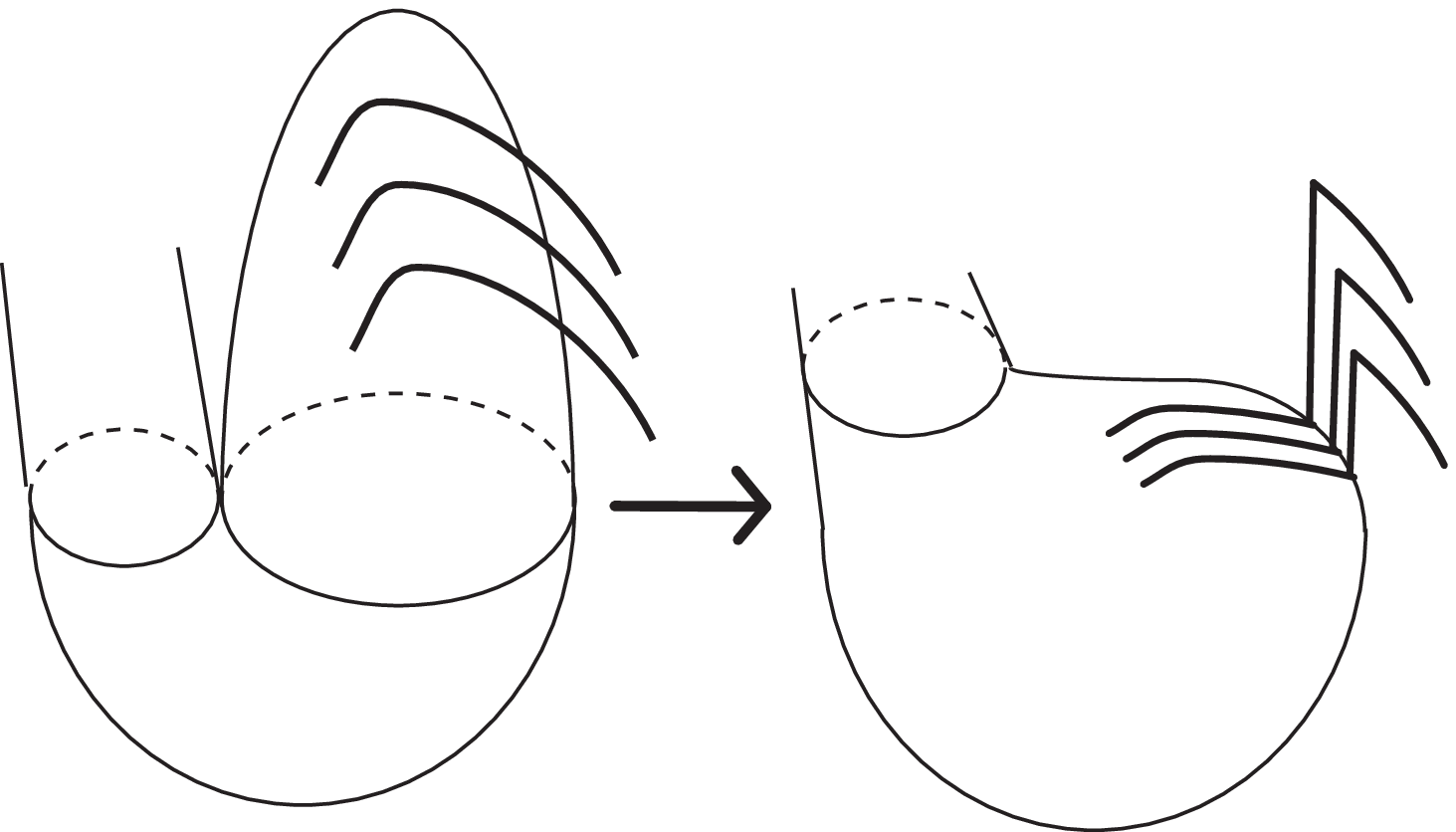}. By repeating this process, we can eliminate all saddles of $\digamma_S$ at the cost of creating at most $n$ new maxima per saddle. Thus, we can assume we have an embedding of $K_1\ast_{S}K_2$ with at most $\beta(K_1\ast_{S}K_2)+n(10n-8)$ maxima such that $\digamma_S$ contains no saddles. Denote this embedding of $K_1\ast_{S}K_2$ by $K^*$.

\begin{figure}[h]
\centering \scalebox{.5}{\includegraphics{createmax.eps}}
\caption{}\label{fig:createmax.eps}
\end{figure}

If $\digamma_S$ contains no saddles, there is a level preserving isotopy of
$S^{3}$ taking $S$ to a standard round 2-sphere. Such an isotopy preserves
the number of maxima of $h_{K^*}$. Recall that $S$ decomposes $S^3$ into two 3-balls $B_1$ and $B_2$. The link $K_{1}$ can be
recovered from the tangle $(B_1,K^* \cap B_{1})$ by glueing a trivial tangle $(B^3,R)$ to $(B_1,K^* \cap B_{1})$ along their common $2n$-punctured sphere boundary. See Figure \ref{fig: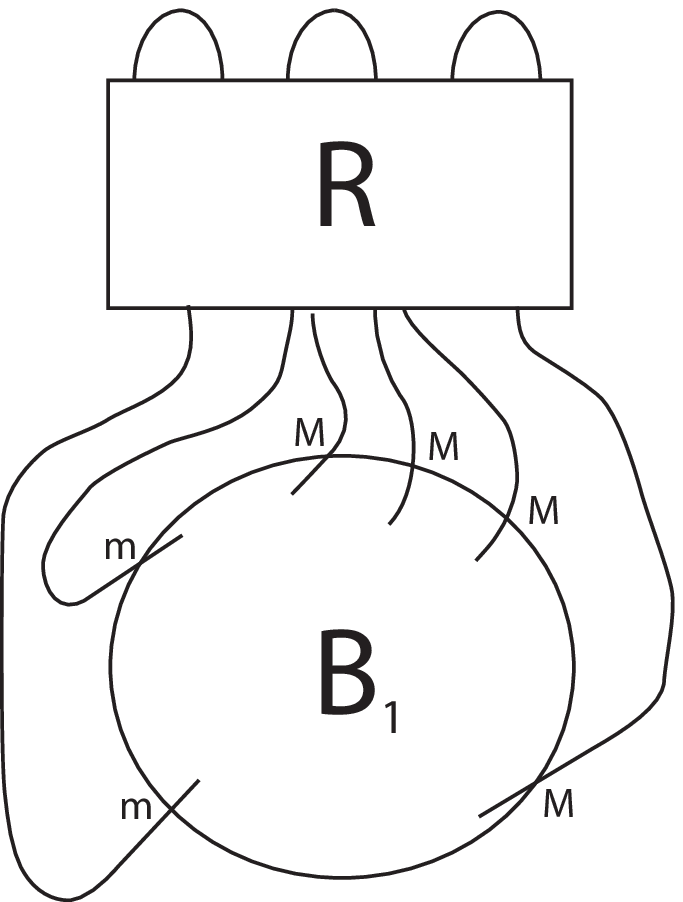}. The number
of maxima of the resulting embedding of $K_{1}$ is at most $n$ more
than the number of maxima of $h|_{K^*}$ in $B_{1}$. By a similar argument, we can produce an embedding of $K_{2}$ with at most $n$ more
maxima than the number of maxima of $h|_{K^*}$ in $B_{2}$. Hence, $\beta(K_{1})-n+\beta(K_2)-n\leq \beta(K_1\ast_{S}K_2)+n(10n-8)$, or $\beta(K_1\ast_{S}K_2) \geq \beta(K_1) + \beta(K_2) - n(10n-6)$.

\begin{figure}[h]
\centering \scalebox{.5}{\includegraphics{addingrational2.eps}}
\caption{}\label{fig:addingrational2.eps}
\end{figure}

\end{proof}

\begin{rmk}
With more detailed analysis the constant $- n(10n-6)$ that appears in the statement of Theorem \ref{main} can be improved. However, the author believes it can not be improved beyond a quadratic expression in $n$ using the techniques presented in this paper.
\end{rmk}

\end{document}